\documentclass{article}
\usepackage[T1]{fontenc}
\usepackage[utf8]{inputenc}
\usepackage{amsmath,amssymb,amsthm}
\usepackage{enumitem}
\usepackage{mathrsfs}
\usepackage{graphicx}
\usepackage{subcaption}
\usepackage{authblk}
\usepackage{mathabx}
\usepackage{hyperref}

\newtheorem{theorem}{Theorem}[section]
\newtheorem{lemma}[theorem]{Lemma}
\newtheorem{proposition}[theorem]{Proposition}
\newtheorem{corollary}[theorem]{Corollary}
\newtheorem{definition}[theorem]{Definition}

\theoremstyle{remark}
\newtheorem{remark}[theorem]{Remark}
\theoremstyle{plain}

\newcommand{\forceP}{\mathbb{P}}
\newcommand{\forceQ}{\mathbb{Q}}
\newcommand{\forceR}{\mathbb{R}}

\newcommand{\one}{\mathbf{1}}
\newcommand{\Code}{\operatorname{Code}}

\newcommand{\Graph}{\operatorname{Graph}}

\newcommand{\ZFC}{\mathsf{ZFC}}
\newcommand{\CH}{\mathsf{CH}}

\newcommand{\Ord}{\mathrm{Ord}}
\newcommand{\dom}{\operatorname{dom}}

\newcommand{\Real}{\mathbb{R}}

\title{On graphs of total projective functions}
\author{Stefan Hoffelner\footnote{TU Wien. The author's research was funded in whole by the Austrian Science Fund (FWF), Grant DOI 10.55776/P37228. For the purpose of open access, the author has applied a CC BY public copyright license to any Author Accepted Manuscript version arising from this submission.}}
\date{\today}

\begin{document}
\maketitle

\begin{abstract}
It is well known that the graph of a total $\mathbf{\Sigma}^1_n$-function is
$\mathbf{\Pi}^1_n$. We prove the consistency of the dual assertion at the
third projective level: there is a model of $\ZFC$ in which the graph of every
total $\mathbf{\Pi}^1_3$-function is $\mathbf{\Sigma}^1_3$. This principle is
incompatible with $\mathbf{\Pi}^1_3$-uniformization and hence with the usual
projective-determinacy picture. The construction also repairs the final step
of the failure-of-uniformization argument from~\cite{HOFFELNER2023103292}.
\end{abstract}

\section{Introduction}

The projective hierarchy is proper, and this prevents any general formal
duality between $\mathbf{\Sigma}^1_n$ and $\mathbf{\Pi}^1_n$ uniformization
statements. For graphs of total functions, however, one direction behaves
differently. If $f\colon\Real\to\Real$ is a total function whose graph is
$\mathbf{\Sigma}^1_n$, then the graph of $f$ is also $\mathbf{\Pi}^1_n$, since
\[
 (x,y)\in\Graph(f)
 \quad\Longleftrightarrow\quad
 \forall z\bigl((x,z)\in\Graph(f)\rightarrow z=y\bigr).
\]
The same argument applies in the boldface classes, with the relevant real
parameters carried along. There is no analogous formal argument for total
$\mathbf{\Pi}^1_n$-functions. The purpose of this paper is to show that at the
third projective level the corresponding dual statement is nevertheless
consistent.

\begin{theorem}\label{thm:main}
Assuming $\operatorname{Con}(\ZFC)$, there is a model of $\ZFC$ in which every
total $\mathbf{\Pi}^1_3$-function has a $\mathbf{\Sigma}^1_3$ graph.
\end{theorem}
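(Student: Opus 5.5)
We plan to build the model by a countable-support iteration over $L$ that uses coding forcings, following the style of the author's earlier constructions of failure of $\mathbf{\Pi}^1_3$-uniformization in~\cite{HOFFELNER2023103292}.

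Fix in $L$ an almost disjoint family of $\omega_1$-branches, or Suslin trees, $\vec S=(S_\alpha:\alpha<\omega_1)$. Each can later be destroyed or kept through $\aleph_1$-sized forcings, which gives a $\Sigma^1_3$-definable way of writing patterns into the universe. The point is the following. Suppose $\varphi(x,y)$ is a $\Pi^1_3$ formula, with a parameter, that defines the graph of a total function in the final model $L[G]$. For each triple $(\varphi,x,y)$ with $\varphi(x,y)$ we want $L[G]$ to contain a real that codes the statement ``$(x,y)$ is the value for $x$''. This code should be witnessed $\Sigma^1_3$-definably, say by $\omega_1$ many trees of a prescribed block being destroyed, read off by a $\Sigma^1_3$ formula through countable suitable models of the form $L_\beta[r]$.

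The bookkeeping runs through all $\aleph_2$ many names for reals $x$, formulas $\varphi$ and parameters. At stage $\beta$ we consider the current pair $(x,\varphi)$ together with the candidates $y$, and we code $(\varphi,x,y)$ only when $\varphi(x,y)$ holds in the intermediate model $L[G_\beta]$ and this is forced to persist. The central technique, as in the earlier paper, is to split the iteration into allowable forcings. We define a class of allowable iterations of coding forcings such that $\Pi^1_3$ statements are upward absolute along allowable extensions and $\Sigma^1_3$ statements are downward absolute. Then at each stage we either decide that $\varphi(x,y)$ holds in every further allowable extension and write the code, or we find an allowable extension that kills it. The resulting $\Sigma^1_3$ definition reads: $(x,y)\in\Graph(f)$ iff there exists a real $r$ witnessing that the code for $(\varphi,x,y)$ has been written.

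Correctness needs two directions.
\begin{enumerate}
\item Every true pair is coded. For this we use the reflection/$\aleph_2$-c.c.\ argument that every real and every relevant name appears at some stage $\beta<\omega_2$.
\item No false pair is coded. This is where totality is used. If $(\varphi,x,y')$ is coded and $y'\neq y=f(x)$, we reach a contradiction, because $\varphi(x,y')$ was forced to persist through all allowable extensions and in particular holds in $L[G]$. That contradicts functionality.
\end{enumerate}
Preservation of $\aleph_1$ and properness of the iteration would be checked by the standard arguments for countable-support iterations of proper coding forcings.

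The main obstacle is the persistence notion. We must ensure that the decision ``$\varphi(x,y)$ will remain true'' made at stage $\beta$ is honored by the whole tail iteration, even though $\Pi^1_3$ truth is not upward absolute in general. The first approach would be to work with $\Sigma^1_3$ statements that are true in all allowable outer models and to make the tail consist only of allowable forcings. We would then prove an absoluteness lemma stating that for an allowable $\mathbb P$, if $L[G_\beta]\models$ ``every allowable extension satisfies $\varphi(x,y)$'', then the same holds in $L[G_\beta][H]$. This is exactly the ``final step'' issue mentioned in the abstract, and we expect most of the work to go there.
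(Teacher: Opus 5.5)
\textbf{There is a genuine gap.} It lies where you locate the main work, but it is more serious than an absoluteness lemma.

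First, your correctness argument never actually uses totality or functionality. Direction 2 derives $\varphi(x,y')$ in $L[G]$ for every coded pair, so $(x,y')$ is simply a true pair and functionality plays no role. Direction 1 shows that every true pair is coded. The same argument would therefore prove that \emph{every} $\mathbf{\Pi}^1_3$ relation, total, partial or arbitrary, is $\mathbf{\Sigma}^1_3$ in $L[G]$. That is impossible, because the projective hierarchy is proper. It is exactly the overstrong claim about partial $\mathbf{\Pi}^1_3$ graphs (Lemma 3.13 of the earlier paper) that the present paper identifies as false. Your plan reproduces that error rather than repairing it.

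The hole is the notion of ``allowable''. Your argument needs a class $\mathcal C$ with the following properties:
\begin{itemize}
\item it contains all your coding steps and all your killing forcings;
\item it is closed under tails;
\item every member of $\mathcal C$ writes only codes that are $\mathcal C$-persistent. Without this, the killing forcings themselves may write junk codes, and ``coded implies true'' fails.
\end{itemize}
Persistence relative to $\mathcal C$ is antitone in $\mathcal C$, so no such self-consistent class need exist. By the hierarchy argument above, one cannot exist in the form your argument needs. Your requirement that allowable forcings make $\Pi^1_3$ upward absolute and $\Sigma^1_3$ downward absolute also cannot hold for any class containing coding forcings, since writing a code makes a new $\Sigma^1_3$ statement true. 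The lemma you propose, that ``every allowable extension satisfies $\varphi$'' is inherited by allowable extensions, is the easy part. It follows from closure under concatenation.

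\textbf{What the paper does instead.} It breaks the circularity with a decreasing derivative $\Gamma_\alpha$ of allowable classes and works at the stable class $\Gamma_\infty$. At a successor step one computes the set $Y^\delta$ of $\Gamma_\delta$-preserved witnesses for $(m,p,x)$.
\begin{itemize}
\item If $Y^\delta$ is nonempty, the step codes a fixed representative.
\item If $Y^\delta$ is empty, the step codes an arbitrary bookkeeping guess $z$.
\end{itemize}
So junk codes are genuinely allowed, and totality must be used to keep them out of the $\mathbf{\Sigma}^1_3$ definition. In the final $\omega_1$-length construction, a no-witness stage also destroys $\varphi_m(x,z,p)$ via the fixed-point dichotomy. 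Let $w$ be a guessed value and $f$ the total function.
\begin{itemize}
\item If $w=f(x)$, the destruction persists, which is a contradiction.
\item If $w\neq f(x)$, the later preserved witness $f(x)$ must be new over the guessing-stage model, because old reals cannot become protected: each of them can be destroyed by an $\infty$-allowable continuation. Then two mutually generic copies of the tail (the squaring argument) yield two distinct preserved witnesses for the same $x$. This contradicts functionality.
\end{itemize}
Your proposal has no analogue of the derivative and fixed point, nor of this witness/no-witness dichotomy with squaring. It is the only place where functionality does real work, and any argument lacking such a step cannot succeed.
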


This is not merely a formal curiosity. As shown below, the total-graph
principle obtained in Theorem~\ref{thm:main} is incompatible with the
$\mathbf{\Pi}^1_3$-uniformization property. It therefore gives another example
of a projective phenomenon which cannot occur under projective determinacy.

Uniformization, separation and reduction are among the classical regularity
principles for definable sets of reals; see, for example,
\cite{Lusin,Moschovakis2,Kechris,Moschovakis}. Under projective determinacy,
these principles are governed by the periodicity phenomena and scale machinery
of the projective hierarchy; see \cite{martin1989proof,kechris2008games}. The
present paper belongs to a different line of investigation: the study, by
forcing, of the possible behaviour of uniformization, reduction and separation
in the absence of projective determinacy. It is also connected with the older
use of projective wellorderings and coding constructions to control definable
sets of reals; see, for example,
\cite{Addison,Harrington,CS,friedman2003universally}. In such forcing
extensions the projective hierarchy can display patterns which are ruled out by
the determinacy picture. Theorem~\ref{thm:main} is one such pattern.

This work is part of a broader programme developed in
\cite{H,HOFFELNER2023103292,hoffelner2022forcingpi1nuniformizationproperty,
BPFA_and_global_Sigma-uniformization,HoffelnerForcingAxiomsUniformization2024,
HoffelnerFailureReductionSeparation2023,HoffelnerLargeContinuumGlobalSigma2025}.
The guiding theme of this programme is that forcing can produce finely
calibrated projective universes in which the classical regularity principles
separate from one another, combine in unexpected ways, or hold globally at
levels where the determinacy intuition suggests a very different picture. The
consistency of $\mathbf{\Sigma}^1_3$-separation was obtained in~\cite{H}; a
model of $\mathbf{\Pi}^1_3$-reduction together with a failure of
$\mathbf{\Pi}^1_3$-uniformization was constructed in
\cite{HOFFELNER2023103292}; forcing methods for
$\mathbf{\Pi}^1_n$-uniformization were developed in
\cite{hoffelner2022forcingpi1nuniformizationproperty}; and global
$\Sigma$-uniformization phenomena in the presence of forcing axioms or large
continuum are studied in
\cite{BPFA_and_global_Sigma-uniformization,
HoffelnerForcingAxiomsUniformization2024,HoffelnerLargeContinuumGlobalSigma2025,
hoffelner2025forcinguppersigmauniformizationpresence}. The present article
adds another instance to this programme: it shows that total projective
functions may have a definability behaviour which is not dictated by the
formal duality between $\Sigma$ and $\Pi$ definitions and which is incompatible
with the projective-determinacy pattern.

The proof is another instance of a fixed-point method for iterations of coding
forcings. In these arguments one wants to iterate coding forcings long enough
to meet all relevant requirements, but the correctness of a coding step depends
on preservation by all future forcings which are still allowed. Thus the class
of allowable continuations and the preservation requirements are defined
simultaneously. This creates an apparent circularity. The way out is to
organize the construction as a transfinite derivative on classes of allowable
iterations and to work at the eventual fixed point. This method is central in
the $\mathbf{\Pi}^1_3$-reduction construction of
\cite{HOFFELNER2023103292} and in the forcing construction for
$\mathbf{\Pi}^1_n$-uniformization in
\cite{hoffelner2022forcingpi1nuniformizationproperty}; a simpler version also
appears in the forcing construction for $\mathbf{\Sigma}^1_3$-separation in
\cite{H}. The technique has recently been extended and refined in
\cite{hoffelner2025forcinguppersigmauniformizationpresence}.

Here the fixed-point method is adapted to the total-graph problem. The new
feature is the dichotomy between preservation witnesses and no-witness stages.
If a local model has no preservation witness for a possible value of a total
graph, then old candidates can be destroyed. If a genuinely preserved witness
appears only later, it must be new over the earlier local model. A standard
squaring argument then turns such a later witness into two independent
witnesses, thereby forcing non-functionality. This is the mechanism which
prevents no-witness, or guessing, stages from contributing junk to the final
$\mathbf{\Sigma}^1_3$ definition of the graph.

The construction uses the coding machinery from
\cite{HOFFELNER2023103292}. That paper correctly develops the forcing
framework used for $\mathbf{\Pi}^1_3$-reduction, but its final extraction of a
non-uniformizable relation relied on an overstrong assertion about partial
$\mathbf{\Pi}^1_3$ graphs. The present paper isolates the total-graph statement
which is actually needed and proves it directly. This provides the promised
repair of the final step.
\section{Preliminary considerations}
We first record why Theorem~\ref{thm:main} implies a failure of $\mathbf{\Pi}^1_3$-uniformization.

\begin{lemma}\label{lem:no-sigma-unif}
In $\ZFC$ there is a total $\mathbf{\Pi}^1_3$ relation $R\subseteq \Real^2$ with $\dom(R)=\Real$ which cannot be uniformized by any function whose graph is $\mathbf{\Sigma}^1_3$.
\end{lemma}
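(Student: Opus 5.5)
The plan is a diagonalization against a universal $\Sigma^1_3$ set. Fix a (lightface) universal set $U\subseteq\Real^3$ for $\Sigma^1_3$ subsets of $\Real^2$, so that every $\mathbf{\Sigma}^1_3$ set $G\subseteq\Real^2$ equals $U_a=\{(x,y):U(a,x,y)\}$ for some parameter $a$. Let $c_0,c_1$ be the constant sequences. The first candidate is
\[
 R(x,y)\quad\Longleftrightarrow\quad y\in\{c_0,c_1\}\ \wedge\ \neg\exists z\bigl(U(x,x,z)\wedge z(0)=y(0)\bigr).
\]
This is $\Pi^1_3$, since the inner clause is the negation of a $\Sigma^1_3$ formula and membership in $\{c_0,c_1\}$ is closed.

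The diagonal step should then be routine. Suppose $f$ uniformizes $R$ and $\Graph(f)=U_a$. Then the section $\{z:U(a,a,z)\}$ is exactly $\{f(a)\}$, and $f(a)\in\{c_0,c_1\}$. Now $R(a,f(a))$ requires that no $z$ in the section satisfies $z(0)=f(a)(0)$. But $z=f(a)$ itself is such a $z$, which is a contradiction. Note that the argument never uses that $f$ has a $\Pi^1_3$ graph, only that $\Graph(f)$ is some $U_a$.

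The main obstacle is totality of $R$. The definition above fails to be total at those $x$ for which the diagonal section $\{z:U(x,x,z)\}$ contains reals beginning with $0$ and reals beginning with $1$. The naive fix adds a third option for such $x$, and that option is $\Sigma^1_3$, which destroys the complexity bound. So the real work is to replace $U$ by a universal system $U'$ with two properties. First, every section $\{z:U'(a,x,z)\}$ meets at most one of the two basic clopen halves. Second, $U'$ still captures the graph of every total function with $\mathbf{\Sigma}^1_3$ graph, at least on the diagonal.

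My first attempt is a selection on witnesses. Write
\[
 U(a,x,z)\ \Longleftrightarrow\ \exists w\,P(a,x,z,w)
\]
with $P\in\Pi^1_2$. I would try to reorganize this as $\exists w\,\exists v\,Q$ with $Q\in\Pi^1_1$ and use the Kond\^o--Addison uniformization of $\Pi^1_1$ sets in $\ZFC$. The aim is to pick, uniformly in $(a,x)$, a canonical witness determining a single first bit, and to let $U'(a,x,z)$ assert that $z(0)$ equals the bit so selected. For a genuine function graph the section is a singleton, so this selection changes nothing. The point to verify carefully is that the selection keeps $U'$ within $\Sigma^1_3$ while making its sections single-bit.

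If that fails, the fallback is to allow $y$ to range over all of $\Real$ and diagonalize coordinatewise. Here one uses that a total $\mathbf{\Sigma}^1_3$-graph function is $\mathbf{\Delta}^1_3$, so that each bit $f(x)(n)$ is $\mathbf{\Delta}^1_3$ in $x$. One then defines $R(x,y)$ to require that $y$ differ at a suitable coordinate from every value the diagonal section proposes there, while always leaving some $y$ available.
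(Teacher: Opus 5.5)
Your diagonal step is correct, but it applies only to the non-total first candidate, and the totality repair cannot be carried out in $\ZFC$.

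\textbf{The selection step.} The reorganization in your first attempt is a complexity error. $U$ has the form $\exists w\,P$ with $P\in\Pi^1_2$, and $P$ cannot be rewritten as $\exists v\,Q$ with $Q\in\Pi^1_1$; that would make $U$ a $\Sigma^1_2$ set. Kond\^o--Addison uniformizes $\Pi^1_1$ and $\Sigma^1_2$ relations, not $\Pi^1_2$ ones. A $\Sigma^1_3$ choice of one bit for each $(a,x)$ amounts to a $\mathbf{\Sigma}^1_3$-reduction of the pair $S_i=\{(a,x):\exists z\,(U(a,x,z)\wedge z(0)=i)\}$, and that is not a $\ZFC$ theorem.

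\textbf{Why no two-valued $R$ can work.} Let $R\subseteq\Real\times\{c_0,c_1\}$ be total and $\mathbf{\Pi}^1_3$. Then $C_i=\{x:\neg R(x,c_i)\}$ are disjoint $\mathbf{\Sigma}^1_3$ sets. Assume $\mathbf{\Sigma}^1_3$-separation, which holds under projective determinacy and is consistent relative to $\ZFC$ by \cite{H}. Then there is a $\mathbf{\Delta}^1_3$ set $D$ with $C_1\subseteq D$ and $D\cap C_0=\emptyset$. The map sending $x$ to $c_0$ on $D$ and to $c_1$ off $D$ uniformizes $R$, and its graph is $\mathbf{\Delta}^1_3$. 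The relation you would build from $U'$ is exactly such a total two-valued counterexample. So a $\Sigma^1_3$ set $U'$ with your properties (1) and (2) cannot be shown to exist.

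\textbf{The fallback.} It never specifies $R$. As long as you diagonalize against sections of a universal $\Sigma^1_3$ set, the same obstruction remains. A diagonal section may be all of $\Real$, and then no $y$ differs from all of its members, coordinatewise or otherwise. Recognizing such bad sections (for instance ``the section has two members'') is a $\Sigma^1_3$ condition, which cannot be placed inside a $\Pi^1_3$ definition.

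\textbf{The missing idea.} Let the values of $R$ be existential witnesses rather than bits, and diagonalize against $\mathbf{\Delta}^1_3$ sets rather than against a universal set. This is the paper's argument.
\begin{enumerate}[label=\textup{(\arabic*)}]
\item Take $B\in\mathbf{\Sigma}^1_3\setminus\mathbf{\Delta}^1_3$ and write $x\in B\iff\exists y\,P(x,y)$ with $P\in\mathbf{\Pi}^1_2$ and $P(x,y)\rightarrow y\neq 0$.
\item Put $R(x,y)\iff P(x,y)\vee(x\notin B\wedge y=0)$. This relation is total and $\mathbf{\Pi}^1_3$.
\item A uniformizer $g$ with $\mathbf{\Sigma}^1_3$ graph is total, so its graph is $\mathbf{\Delta}^1_3$.
\item Since $g(x)=0\iff x\notin B$, the set $B$ would be $\mathbf{\Delta}^1_3$, a contradiction.
\end{enumerate}
This escapes the separation obstruction because the set of $x$ at which a nonzero value is admissible is $B$ itself. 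That set is only $\mathbf{\Sigma}^1_3$, and the nonzero values must be genuine $\mathbf{\Pi}^1_2$-witnesses.
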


\begin{proof}
Let $B\subseteq\Real$ be a $\mathbf{\Sigma}^1_3$ set which is not $\mathbf{\Delta}^1_3$ in the boldface sense.
Fix a $\mathbf{\Pi}^1_2$ relation $P$ such that
\[
 x\in B\iff \exists y\,P(x,y).
\]
By applying a recursive bijection if necessary, we may assume without loss of generality that
\[
   \forall x\forall y\bigl(P(x,y)\rightarrow y\neq 0\bigr).
\]
Define $R\subseteq\Real^2$ by
\[
 R(x,y)\iff P(x,y)\vee (x\notin B\wedge y=0).
\]
Then $R$ has full first projection.
Moreover, $R$ is $\mathbf{\Pi}^1_3$, since projective pointclasses are closed under finite unions and $P$ is $\mathbf{\Pi}^1_2\subseteq\mathbf{\Pi}^1_3$, while the second disjunct is $\mathbf{\Pi}^1_3$.

Suppose for contradiction that $g\colon\Real\to\Real$ uniformizes $R$ and that $\Graph(g)\in\mathbf{\Sigma}^1_3$.
Since $g$ is total, its graph is also $\mathbf{\Pi}^1_3$, hence $\Graph(g)\in\mathbf{\Delta}^1_3$.
For every $x\in\Real$, since $g$ uniformizes $R$, $R(x,g(x))$ holds.
If $x\notin B$, then $\neg\exists y\,P(x,y)$, so the second disjunct of $R$ must hold and $g(x)=0$.
Conversely, if $x\in B$, then $P(x,g(x))$ holds; by the choice of $P$, this implies $g(x)\neq0$.
Thus
\[
 x\notin B\iff g(x)=0.
\]
Because $\Graph(g)\in\mathbf{\Delta}^1_3$, the preimage $g^{-1}[\{0\}]$ is $\mathbf{\Delta}^1_3$.
This implies $\Real\setminus B\in\mathbf{\Delta}^1_3$, contradicting the choice of $B$.
\end{proof}

\begin{corollary}\label{cor:unif-fails}
In every model in which all total $\mathbf{\Pi}^1_3$-functions have $\mathbf{\Sigma}^1_3$ graphs, the $\mathbf{\Pi}^1_3$-uniformization property fails.
\end{corollary}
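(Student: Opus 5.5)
The corollary follows by combining Lemma~\ref{lem:no-sigma-unif} with the hypothesis. We argue by contradiction inside a model $M$ of $\ZFC$ in which every total $\mathbf{\Pi}^1_3$-function has a $\mathbf{\Sigma}^1_3$ graph, and we assume that $\mathbf{\Pi}^1_3$-uniformization holds in $M$.

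Lemma~\ref{lem:no-sigma-unif} is a theorem of $\ZFC$, so it holds in $M$. It supplies a $\mathbf{\Pi}^1_3$ relation $R\subseteq\Real^2$ with $\dom(R)=\Real$ which no function with $\mathbf{\Sigma}^1_3$ graph uniformizes. By $\mathbf{\Pi}^1_3$-uniformization there is a function $g$ with $\Graph(g)\subseteq R$, $\Graph(g)\in\mathbf{\Pi}^1_3$, and $\dom(g)=\dom(R)=\Real$. In particular $g$ is total.

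So $g$ is a total $\mathbf{\Pi}^1_3$-function. By the hypothesis on $M$, $\Graph(g)$ is $\mathbf{\Sigma}^1_3$. Thus $g$ is a function with $\mathbf{\Sigma}^1_3$ graph uniformizing $R$, which contradicts Lemma~\ref{lem:no-sigma-unif}. Hence $\mathbf{\Pi}^1_3$-uniformization fails in $M$.

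No step here is a real obstacle. The only point to check is that the uniformizing function has full domain, and this is immediate because uniformization preserves the domain and $\dom(R)=\Real$.
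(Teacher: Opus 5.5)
Your argument is correct and is essentially the paper's proof. You apply $\mathbf{\Pi}^1_3$-uniformization to the relation $R$ of Lemma~\ref{lem:no-sigma-unif}, use the hypothesis to make the uniformizing function's graph $\mathbf{\Sigma}^1_3$, and contradict the lemma; your explicit check that the uniformizing function is total because $\dom(R)=\Real$ is a step the paper leaves implicit, and it is needed to invoke the hypothesis on total functions.
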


\begin{proof}
If $\mathbf{\Pi}^1_3$-uniformization held, the relation $R$ from Lemma~\ref{lem:no-sigma-unif} would be uniformized by some function $f_R$ such that $\Graph(f_R)\in\mathbf{\Pi}^1_3$.
By the assumed property, $\Graph(f_R)$ would also be $\mathbf{\Sigma}^1_3$, contradicting Lemma~\ref{lem:no-sigma-unif}.
\end{proof}

\begin{remark}
The elementary observation used in the introduction has a corresponding dual use: whenever a universe satisfies the appropriate projective uniformization principle, a total $\mathbf{\Sigma}^1_n$ relation can be uniformized by a function whose graph is both $\mathbf{\Sigma}^1_n$ and $\mathbf{\Pi}^1_n$.
We shall not use this observation below.
\end{remark}

\section{The base model and coding machinery}\label{sec:base}
The construction uses the coding machinery developed in~\cite{HOFFELNER2023103292}. In this section we recall only the parts of that machinery which are used below, and we fix the notation for the present application. The flaw in~\cite{HOFFELNER2023103292} occurs only in the final extraction of a non-uniformizable relation. It does not affect the construction of the ground model, the coding forcings, the product closure of allowable forcings, or the preservation lemmas for allowable forcings. We shall use those parts as a black box, with the coding real replaced by a recursive code for the tuple relevant to a projective graph.

We work over a ground model $W$ obtained by an $\omega$-distributive forcing over $L$. Starting in $L$, let
\[
   \forceQ=\forceQ^0*\forceQ^1*\forceQ^2
\]
be the standard three-step forcing from~\cite{HOFFELNER2023103292}. The first iterand adds, by countable approximations, a sequence
\[
   \vec S=\langle S_\alpha\mid \alpha<\omega_1\rangle
\]
of pairwise independent Suslin trees over $L$. The subsequent two iterands locally code this sequence into the structure of $H(\omega_2)$. The forcing $\forceQ$ is $\omega$-distributive, hence adds no reals. If $G\subseteq\forceQ$ is $L$-generic and $W=L[G]$, then in $W$ the sequence $\vec S$ is uniquely characterized by a $\Sigma_1$ formula over $H(\omega_2)$ with parameter $\omega_1$. This definition is absolute to all generic extensions preserving $\omega_1$. Thus, throughout the later forcing construction, the phrase ``the $\alpha$-th coding tree'' continues to have the same meaning in every relevant intermediate extension.

Because the final theorem is a boldface statement, real parameters must be included in the coding. We fix a recursive enumeration
\[
   \langle \varphi_m(v_0,v_1,v_2)\mid m<\omega\rangle
\]
of all lightface $\Pi^1_3$ formulas in three displayed real variables. For a real parameter $p$, set
\[
   A_{m,p}=\{(x,y)\in\Real^2\mid \varphi_m(x,y,p)\}.
\]
Thus every boldface $\mathbf{\Pi}^1_3$ subset of the plane is some $A_{m,p}$. Given reals $p,x,y$ and $m<\omega$, let $(p,x,y,m)$ denote a fixed recursive real code for the quadruple.

We use the same local coding device as in~\cite{HOFFELNER2023103292}. The forcing which writes the real $(p,x,y,m)$ into the Suslin-tree pattern is denoted by
\[
   \Code(p,x,y,m):=\forceP_{(p,x,y,m)}
      =\mathbb C(\omega_1)^L*\dot{\mathbb A}(\dot Y).
\]
Here $\mathbb C(\omega_1)^L$ is the forcing, computed in $L$, which adds a Cohen subset of $\omega_1$ by countable conditions. The generic Cohen set chooses, via the fixed constructible bookkeeping of~\cite{HOFFELNER2023103292}, an $\omega_1$-sequence of fresh $\omega$-blocks of indices in $\vec S$. Inside each selected block the bits of the real $(p,x,y,m)$ are represented by adding branches through one of two corresponding Suslin trees. The second factor $\mathbb A(\dot Y)$ is Jensen--Solovay almost-disjoint coding, relative to the fixed constructible almost-disjoint family, and codes the reshaped set $Y\subseteq\omega_1$ into a single real. The reshaping is the local coding step from~\cite{HOFFELNER2023103292}: it ensures that countable transitive models containing the final coding real can reconstruct enough of the branch pattern to verify the intended code by a projective statement of the required complexity.

The precise definition of $Y$ will not be repeated here. The relevant points are the following. First, $Y$ is obtained uniformly from the Cohen reservoir and from the tuple $(p,x,y,m)$. Second, the almost-disjoint forcing $\mathbb A(Y)$ depends only on the set $Y\subseteq\omega_1$ and on the fixed almost-disjoint family from $L$; it is therefore independent of the ambient intermediate universe once $Y$ is present and $\omega_1$ is preserved. Third, the resulting code is upward absolute in the intended sense: once the code has been introduced, the corresponding projective decoding statement remains true in all further extensions preserving the relevant coding structure.

We first record the one-step coding consequence. The corresponding
iteration facts are collected below, after the definition of allowable forcings.

\begin{lemma}\label{lem:coding}
There is a recursive coding of quadruples $t=(p,x,y,m)$ by reals and a uniformly
$\mathbf{\Sigma}^1_3$ predicate
\[
   \Phi(p,x,y,m)
\]
with the following one-step property. Let $M$ be any intermediate
$\omega_1$-preserving extension in which the fixed sequence of coding trees is
still $\Sigma_1$-definable over $H(\omega_2)$. If $t=(p,x,y,m)\in M$ and one
forces over $M$ with a single coding forcing $\Code(p,x,y,m)$, using a fresh
coding block assigned to $t$, then the resulting extension satisfies
\[
   \Phi(p,x,y,m).
\]
Moreover, the decoding on this block is exact: the block used by this instance
of $\Code(t)$ decodes the tuple $t$ and no different tuple. The formula
$\Phi$ is absolute to the $\omega_1$-preserving generic extensions considered
below, because the sequence of coding trees remains $\Sigma_1$-definable over
$H(\omega_2)$.
\end{lemma}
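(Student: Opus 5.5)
The plan is to take $\Phi(p,x,y,m)$ to be the statement: ``there is a real $r$ such that every countable transitive model $N$ of a sufficiently large finite fragment of $\ZFC^-$ (say with $\omega_1^N$ admissible-correct) containing $r$ and satisfying that $\omega_1$ exists sees that $r$ almost-disjointly codes a set $Y\subseteq\omega_1^N$ whose decoded branch pattern through the $\Sigma_1$-defined coding trees $\vec S^N$, on some $\omega$-block, spells out the bits of $(p,x,y,m)$.'' Formally this is $\exists r\,\forall N\,(\ldots)$ where the inner quantifier over countable models coded by reals is $\Pi^1_2$ and the clause inside is arithmetic/$\Delta^1_1$ in the codes, so $\Phi$ is $\Sigma^1_3$, uniformly in the recursive code of the quadruple. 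The recursive coding of quadruples is just a fixed recursive pairing into $2^\omega$, chosen so that bits can be read off independently.

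For the one-step property I will argue as in \cite{HOFFELNER2023103292}: force with $\mathbb C(\omega_1)^L$ to select a fresh block, add branches through $S_{2k}$ or $S_{2k+1}$ according to the $k$-th bit of $t$, reshape to obtain $Y$, and then add the almost-disjoint real $r$ coding $Y$ via $\mathbb A(Y)$. Given a countable transitive $N\ni r$, $N$ can decode from $r$ the set $Y\cap\omega_1^N$, and reshaping guarantees that $N$ computes $\vec S^N$ correctly on its own $\omega_1$ (this is precisely where the $\Sigma_1$-definition of $\vec S$ over $H(\omega_2)$ is used, pulled down to $N$ by elementarity/condensation in $L$) and sees the branches in the block; hence $N$ verifies the intended code. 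I will treat reshaping as the black box from the cited paper.

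Exactness is the main obstacle: I must show the block decodes $t$ and no $t'\neq t$, i.e.\ that for the unselected tree at each index no branch exists. Here I would use the pairwise independence of the Suslin trees in $\vec S$: the product of the branch-adding forcings for the chosen trees is c.c.c.\ and, by independence, leaves every other tree of the sequence Suslin; the Cohen and almost-disjoint factors are also harmless (the former is $\sigma$-closed in $L$ and preserves Suslinity of $L$-trees, the latter is c.c.c.\ and Knaster-like enough to preserve the remaining trees, as shown in \cite{HOFFELNER2023103292}). So the complementary tree at each bit has no branch, and since blocks are fresh no other instance interferes.

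Absoluteness: $\Phi$ is $\Sigma^1_3$, so upward absolute once a witness $r$ exists, provided the $\Pi^1_2$ matrix stays true; by Shoenfield it does in any extension, and the clause about $\vec S$ is unchanged because $\vec S$ retains its $\Sigma_1$ definition over $H(\omega_2)$ in all $\omega_1$-preserving extensions considered, so countable models continue to compute the same trees.
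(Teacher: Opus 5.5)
Your proposal is correct and follows essentially the same route as the paper, whose proof just invokes the one-step decoding lemma of \cite{HOFFELNER2023103292}: the Cohen reservoir selects a fresh block, branches encode the bits, reshaping plus almost-disjoint coding makes the code a $\Sigma^1_3$ statement of the form $\exists r\,\forall N$, and persistence comes from Shoenfield and the $\Sigma_1$-definability of $\vec S$. You go further only on exactness, which the paper reads off from the block being assigned to the single tuple $t$, whereas you give the underlying reason (the unselected trees stay Suslin); for that argument you need full independence of $\vec S$ (all finite subproducts c.c.c., not just pairs), you need the Cohen factor to preserve Suslinity because it is $\sigma$-closed over $W$ as an independent product coordinate (not because it is $\sigma$-closed in $L$, and not over an arbitrary $M$), and you need a routine collapse of a countable elementary submodel of $H(\omega_2)$ to turn ``no branch'' into failure of the $\Pi^1_2$ matrix for $t'\neq t$.
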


\begin{proof}
This is the standard one-step decoding lemma for the coding apparatus of
\cite{HOFFELNER2023103292}, applied to the recursive real code of the tuple
$(p,x,y,m)$. The Cohen reservoir chooses a fresh block of the fixed independent
Suslin-tree sequence, the branch pattern on that block represents the bits of the
real coding $t$, and the Jensen--Solovay almost-disjoint forcing codes the
reshaped set into a real. The reshaping makes the resulting block recognizable
by a $\Sigma^1_3$ condition. Since the block is assigned to the single tuple
$t$, the recovered branch pattern decodes exactly $t$. The definability of the
underlying tree sequence over $H(\omega_2)$ gives the stated absoluteness of the
decoding formula in all later $\omega_1$-preserving extensions.
\end{proof}

We next recall the class of allowable forcings from~\cite{HOFFELNER2023103292}. In the present paper it is useful to formulate the definition so that the product/iteration hybrid is visible. At each stage the construction first adds a fresh copy of \(\mathbb C(\omega_1)^L\) on a new reservoir coordinate, independently of the generic object already constructed. Only after this independent reservoir coordinate has been added does the bookkeeping split into two cases: either the stage is padding and the second component is trivial, or the stage codes a real named in the current intermediate extension by Jensen--Solovay almost-disjoint coding. Thus the object handled by the bookkeeping is a real, not an arbitrary set \(Y\subseteq\omega_1\); the set \(Y\) used by the almost-disjoint forcing is produced uniformly from that real and from the fresh reservoir generic.

\begin{definition}\label{def:allowable0}
Let \(\alpha<\omega_1\). A \emph{hybrid allowable presentation of length \(\alpha\)} over \(W\) consists of a sequence
\[
   \langle \forceP_\beta\mid \beta\leq\alpha\rangle,
\]
a bookkeeping function \(F\in W\) with domain \(\alpha\), pairwise disjoint reservoir coordinates, and pairwise disjoint Suslin-tree coding blocks, satisfying the following clauses.

First, \(\forceP_0\) is trivial. At a successor stage \(\beta+1\), let
\[
   \mathbb C_\beta
   =\mathbb C(\omega_1)^L
\]
be the fresh reservoir forcing assigned to \(\beta\). This is not a
\(\forceP_\beta\)-name and is not interpreted inside the current
\(\forceP_\beta\)-generic extension. The next stage is obtained by first taking the independent product with this fresh reservoir coordinate and then applying a second, iteration-like component:
\[
   \forceP_{\beta+1}
   =
   (\forceP_\beta\times \mathbb C_\beta)*\dot{\mathbb B}_\beta.
\]
Here \(\dot{\mathbb B}_\beta\) is a
\((\forceP_\beta\times\mathbb C_\beta)\)-name determined by \(F(\beta)\) as follows.

\smallskip
\noindent\textup{(1) Padding stage.}
If \(F(\beta)=\star\), then
\[
   \dot{\mathbb B}_\beta=\check{\one}.
\]
Thus the stage still adds the fresh reservoir generic for \(\mathbb C_\beta\), but it performs no almost-disjoint coding.

\smallskip
\noindent\textup{(2) Coding stage.}
If \(F(\beta)\) is a code for a \(\forceP_\beta\)-name \(\dot r_\beta\) such that
\[
   \forceP_\beta\Vdash \dot r_\beta\in 2^\omega,
\]
then, after forcing with \(\forceP_\beta\times\mathbb C_\beta\), the real
\[
   r_\beta=(\dot r_\beta)^{G_\beta}
\]
is read from the current \(\forceP_\beta\)-generic object \(G_\beta\), while the reservoir generic \(c_\beta\subseteq\mathbb C_\beta\) is independent of \(G_\beta\). In the extension by \(G_\beta\times c_\beta\), let \(Y_\beta\subseteq\omega_1\) be the standard reshaped set computed uniformly from \(r_\beta\), from \(c_\beta\), and from the fresh Suslin-tree block assigned to \(\beta\). Then
\[
   \dot{\mathbb B}_\beta=\dot{\mathbb A}(\dot Y_\beta),
\]
where \(\mathbb A(Y_\beta)\) is the Jensen--Solovay almost-disjoint coding forcing, using the fixed constructible almost-disjoint family, which codes \(Y_\beta\) into a real.

At limit stages the support convention is mixed: the reservoir coordinates \(\mathbb C_\beta\) are taken with countable support, while the almost-disjoint coding components \(\dot{\mathbb B}_\beta\) are taken with finite support. Freshness of reservoir coordinates and Suslin-tree coding blocks is part of the definition.

A forcing admitting such a presentation is called \emph{allowable}, or \emph{\(0\)-allowable}. If \(\dot r_\beta\) is the canonical real code for a quadruple \((p,x,y,m)\), we write the corresponding coding stage as
\[
   \Code(p,x,y,m).
\]
This notation abbreviates the whole stage operation over \(\forceP_\beta\): first adjoining the independent reservoir coordinate \(\mathbb C_\beta\), and then performing the almost-disjoint coding determined by the reshaped set associated with the real coding \((p,x,y,m)\).
\end{definition}

The definition should be read as saying that the construction iteratively codes reals
\[
   r_\beta=(\dot r_\beta)^{G_\beta}
\]
whenever the bookkeeping presents such a name. The set \(Y_\beta\subseteq\omega_1\) is merely the internal reshaped object which makes the code projectively recognizable. In particular, the bookkeeping does not choose an arbitrary \(Y\subseteq\omega_1\); it chooses, or skips, a real to be coded, and \(Y_\beta\) is then produced by the fixed coding recipe.

The important point is that the fresh reservoir coordinate is present in both cases. A padding stage is
\[
   \forceP_\beta \longmapsto (\forceP_\beta\times\mathbb C_\beta)*\one,
\]
whereas a coding stage is
\[
   \forceP_\beta \longmapsto
   (\forceP_\beta\times\mathbb C_\beta)*\mathbb A(Y_\beta).
\]
Thus the symbol \(\dot{\mathbb B}_\beta\) denotes only the second, iteration-like component of the stage. The Cohen reservoir forcing \(\mathbb C_\beta\) is never merely a \(\forceP_\beta\)-name for the next iterand; it is an independent product coordinate added before the padding/coding decision is implemented. This is the product characteristic of the construction. The dependence of \(Y_\beta\) on the current real \(r_\beta\), and the fact that later bookkeeping may depend on earlier coding reals, is the iteration characteristic.

Equivalently, for any fixed length \(\delta<\omega_1\), one may first add by a countable-support product all reservoir generics
\[
   \left(\prod_{\beta<\delta}\mathbb C_\beta\right)_{\mathrm{c.s.}}
\]
and then perform the finite-support iteration of the almost-disjoint coding components, using the reservoir generic assigned to the relevant stage. This front-loaded presentation is canonically isomorphic to the hybrid presentation, provided the bookkeeping names are interpreted with the same regular supports. It is often useful conceptually, because it shows that the Cohen part is a product of independent reservoirs, while all genuine coding decisions occur in the almost-disjoint coding part.

\begin{remark}[Why the mixed iteration/product is harmless]\label{rem:mixed-harmless}
The mixed presentation causes no additional preservation or definability difficulty. Indeed, any allowable forcing of length \(\delta<\omega_1\) can be rearranged, up to the canonical isomorphism supplied by the chosen disjoint coordinates, as
\[
   \left(\prod_{\beta<\delta}\mathbb C_\beta\right)_{\textup{c.s.}}
   *
   \left(\mathop{\Asterisk}_{\beta<\delta}\dot{\mathbb B}_\beta\right)_{\textup{f.s.}},
\]
where each \(\mathbb C_\beta\) is a fresh copy of \(\mathbb C(\omega_1)^L\), and where \(\dot{\mathbb B}_\beta\) is either the trivial forcing or the almost-disjoint coding forcing \(\mathbb A(Y_\beta)\) computed from the real named at stage \(\beta\) and from the reservoir generic \(c_\beta\). The first factor is \(\sigma\)-closed, since it is a countable-support product of \(<\omega_1\) many copies of \(\mathbb C(\omega_1)^L\). The second factor is a finite-support iteration of ccc almost-disjoint coding forcings. Hence \(\omega_1\) is preserved.

The rearrangement does not change the intended codes. Moving all reservoir coordinates to the front merely supplies the independent generics \(c_\beta\) in advance. The later finite-support iteration still computes the names \(\dot r_\beta\) in the same intermediate extensions and then applies the same uniform reshaping and almost-disjoint coding recipe. Thus the hybrid notation is only a bookkeeping convenience, not an additional forcing principle.
\end{remark}

The product closure used later is also transparent in this presentation. If two allowable forcings are implemented with disjoint coding coordinates and disjoint reservoir coordinates, their product is obtained by taking the union, or an interleaving, of the two lists of stages and names to be coded. The countable-support product of all reservoir coordinates is still \(\sigma\)-closed, and the finite-support product/iteration of the almost-disjoint coding coordinates is still ccc. Since the decoding predicate looks only at the block assigned to the relevant code, adding further disjoint coding blocks cannot create a false code for a different tuple.

\begin{lemma}\label{lem:basic-preservation}
The following hold for allowable forcings over $W$:
\begin{enumerate}[label=\textup{(\roman*)}]
\item If $\forceP$ is an allowable hybrid presentation of length $\delta$, then at every stage $\beta<\delta$ the fresh reservoir coordinate $\mathbb C_\beta$ has size $\aleph_1$, and
\[
   \forceP_\beta\times\mathbb C_\beta
   \Vdash
   |\dot{\mathbb B}_\beta|\leq\aleph_1,
\]
where $\dot{\mathbb B}_\beta$ is the second, iteration-like component of the stage.
\item Every allowable forcing over $W$ preserves $\omega_1$.
\item The product of two allowable forcings with disjoint reservoir coordinates and disjoint coding coordinates is allowable.
\item If $\langle\forceP_\alpha\mid\alpha\leq\omega_1\rangle\in W$ is an $\omega_1$-length hybrid construction such that each countable initial segment is allowable over $W$, then $\forceP_{\omega_1} \Vdash \CH$.
\item  If a stage of an allowable construction explicitly performs the coding operation $\Code(p,x,y,m)$, using a fresh reservoir coordinate and a fresh coding block, then the next intermediate extension satisfies $\Phi(p,x,y,m)$.
\item  In any allowable construction satisfying the disjoint-coordinate convention, forcing on coding coordinates disjoint from a block assigned to a tuple $t$ cannot create or alter the decoding of that block. Equivalently, apart from codes already present in the initial model, the tuples decoded in the final extension are exactly the tuples explicitly written by stages of the construction.
\item  Once $\Phi(t)$ holds at some stage, it remains true in all later allowable extensions which use fresh coding coordinates. We call this persistence in what follows.
\item  In any countable or $\omega_1$-length construction satisfying the disjoint-coordinate convention and having allowable countable initial segments, if $\Phi(t)$ holds in the final model and was not already true in the initial model, then there is a least stage at which $\Code(t)$ was explicitly introduced. In the base model $W$ no future coding block already decodes a tuple, so every final instance of $\Phi(t)$ has such a first stage.
\end{enumerate}
\end{lemma}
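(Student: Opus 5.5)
The eight clauses of Lemma~\ref{lem:basic-preservation} will be proved in the order listed. Throughout I work with the front-loaded presentation of Remark~\ref{rem:mixed-harmless}: for an allowable forcing of length $\delta<\omega_1$, first the countable-support product $\prod_{\beta<\delta}\mathbb C_\beta$, then the finite-support iteration of the components $\dot{\mathbb B}_\beta$. The black-box facts from~\cite{HOFFELNER2023103292} and Lemma~\ref{lem:coding} supply the decoding behaviour of a single block.

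\emph{Clauses (i)--(iv).} For (i), $\mathbb C(\omega_1)^L$ consists of countable partial functions in $L$, so it has size $\aleph_1^L=\aleph_1$. Moreover $\mathbb A(Y_\beta)$ consists of pairs of a finite subset of $\omega$ and a finite subset of $Y_\beta\subseteq\omega_1$, so it is forced to have size at most $\aleph_1$.

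For (ii), the first factor is $\sigma$-closed, being a countable-support product of countably many $\sigma$-closed posets. It therefore adds no reals and preserves $\omega_1$, and $W$ is an $\omega$-distributive extension of $L$. Over that extension, a finite-support iteration of ccc forcings is ccc. Hence $\omega_1$ is preserved.

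For (iii), interleave the two lists of stages and rename coordinates so they are disjoint. Each name $\dot r_\beta$ from one factor is still a name over the corresponding initial segment of the interleaved iteration, because it mentions only coordinates of its own factor. The result is again a hybrid presentation of countable length, and its bookkeeping function lies in $W$.

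For (iv), each countable initial segment adds reals only through its $\le\aleph_1$-sized ccc part, and $W\models\CH$ since $W$ adds no reals to $L$. The plan is to show that every real of $W[G_{\omega_1}]$ has a nice name of size $\aleph_1$ over some initial segment, and then count. The number of nice names is
\[
\left(\aleph_1^{\aleph_0}\right)^{\aleph_0}=\aleph_1 .
\]
The subtle point is that a real appears at a countable stage; this needs either $\omega_1$-cc of the whole iteration or a reflection argument. I will use the front-loaded picture for each initial segment together with the countable support on the reservoirs, and I expect this to be the delicate part of (iv).

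\emph{Clauses (v)--(viii).} Clause (v) is exactly Lemma~\ref{lem:coding}, applied to the intermediate model $M=W[G_\beta]$. Its hypotheses hold by (ii) together with the $\Sigma_1$-definability of $\vec S$ over $H(\omega_2)$ in all $\omega_1$-preserving extensions.

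Clause (vi) is the main obstacle. The argument has two steps.
\begin{enumerate}[label=\textup{(\alph*)}]
\item Decoding a block means reading which of the two paired Suslin trees in that block acquired a branch. Independence of the sequence $\vec S$ means that forcing with branches on other trees, together with $\sigma$-closed and ccc forcings, keeps the remaining trees Suslin. In particular it adds no branch to a tree that the pattern left unbranched.
\item By the exactness clause of Lemma~\ref{lem:coding}, the reshaping makes $\Phi(t)$ true only via a real coding a pattern on a block assigned to $t$. Blocks are fresh, and in $W$ no future block decodes anything. Hence every decoded tuple is one that was explicitly written.
\end{enumerate}
For (a) I would cite the preservation lemma for allowable forcings from~\cite{HOFFELNER2023103292}, namely that allowable forcings preserve the Suslinity of all unused trees of $\vec S$.

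Clause (vii) follows from upward absoluteness of the $\Sigma^1_3$ witness. The coding real persists, and the branches already added persist. By (ii) together with the absoluteness clause of Lemma~\ref{lem:coding}, the decoding formula keeps its meaning in every later allowable extension.

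Finally, (viii) follows from (vi) and well-foundedness of the ordinal stages. By (vi), if $\Phi(t)$ is new in the final model, then the set of stages explicitly performing $\Code(t)$ is nonempty, so it has a least element. For the $\omega_1$-length case, the witness real for $\Phi(t)$ appears at a countable stage, and this is where the argument from (iv) is used.
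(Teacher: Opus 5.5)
Your plan follows the paper's proof clause by clause. It uses the same $\sigma$-closed reservoir product followed by a finite-support ccc iteration for (i)--(iii), and the same black-box use of the one-step coding lemma plus disjoint coordinates for (v), (vi) and (viii). Two local differences are worth noting.

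\begin{itemize}
\item For (vii), your Shoenfield argument is cleaner than the paper's appeal to locality of decoding. Since $\Phi$ is $\Sigma^1_3$, once a witness real exists its $\Pi^1_2$ matrix stays true in every forcing extension. So persistence does not even need the fresh-coordinate hypothesis.
\item In (vi)(a), the general claim that ccc forcings keep the unused trees Suslin is false: forcing with one of those trees is itself ccc. What is true, and what the cited preservation lemma supplies, is more specific. Each $\mathbb A(Y_\beta)$ is $\sigma$-centered, so $\mathbb A(Y_\beta)\times T$ is ccc for every ccc $T$ (pigeonhole an antichain into one centered piece). And $\sigma$-closed forcing preserves Suslin trees and finite products of independent ones.
\end{itemize}

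The substantive gap is the step you leave open in (iv), which is also needed for the $\omega_1$-length case of (viii).

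\emph{Why your suggested route fails.} The construction is not ccc: already $\mathbb C(\omega_1)^L$ contains the antichain of all conditions with domain $\omega$, which has size $\aleph_1$. For the same reason, your count $(\aleph_1^{\aleph_0})^{\aleph_0}$ is valid only for nice names over the almost-disjoint part. Over $\forceP_\beta$ itself, antichains of size $\aleph_1$ would give $2^{\aleph_1}$ names.

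\emph{The fix for (iv).} Front-load all $\omega_1$ reservoirs at once.
\begin{itemize}
\item The countable-support product $\prod_{\beta<\omega_1}\mathbb C_\beta$ is $\sigma$-closed in $W$. A countable sequence of its conditions lying in $W$ already lies in $L$, because $W$ is an $\omega$-distributive extension of $L$.
\item Hence the reservoir extension $W[\vec c]$ has the reals of $W$ and satisfies $\CH$.
\item Over $W[\vec c]$ the rest is a finite-support iteration of length $\omega_1$ of $\sigma$-centered posets of size $\aleph_1$. It is therefore ccc with a dense subset of size $\aleph_1$.
\item So there are only $\aleph_1^{\aleph_0}=\aleph_1$ nice names for reals. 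This gives $\CH$ directly, without locating reals at countable stages.
\end{itemize}

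\emph{The fix for (viii).} Here you do need that localization.
\begin{itemize}
\item A countable nice name involves only countably many stages. So each real lies in the extension of $W[\vec c]$ by the first $\beta$ almost-disjoint components, for some $\beta<\omega_1$.
\item The tail reservoirs $\prod_{\gamma\ge\beta}\mathbb C_\gamma$ are $\sigma$-closed over $W[\vec c\restriction\beta]$. By Easton's lemma they add no reals over its ccc extension $W[G_\beta]$.
\item Thus a $\Sigma^1_3$ witness for $\Phi(t)$, together with $t$ itself, already lies in some $W[G_\beta]$. Shoenfield gives $\Phi(t)$ there, and (vi) applies to that countable allowable segment.
\end{itemize}

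The paper itself only asserts this localization as ``the usual argument''. With this supplement your proof is at least as complete as the one given.
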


\begin{proof}
For (i), the independent reservoir coordinate is a fresh copy of $\mathbb C(\omega_1)^L$, hence has size $\aleph_1$. After the product with this coordinate has been taken, the second component is either trivial or is Jensen--Solovay almost-disjoint coding with respect to the fixed almost-disjoint family of size $\aleph_1$. Hence the displayed size bound holds in the relevant intermediate model.

For (ii), use the rearrangement from Remark~\ref{rem:mixed-harmless}. The reservoir part is a countable-support product of $<\omega_1$ many copies of $\mathbb C(\omega_1)^L$, hence is $\sigma$-closed. After forcing with it, the almost-disjoint coding part is a finite-support iteration of ccc forcings of length $<\omega_1$, hence is ccc. Therefore the whole allowable forcing preserves $\omega_1$.

For (iii), choose disjoint reservoir and coding coordinates for the two forcings. Interleave their bookkeeping functions. The rearranged form of the product is again a countable-support product of independent $\mathbb C(\omega_1)^L$ reservoirs followed by a finite-support iteration of almost-disjoint coding forcings, with trivial second components at padding stages. Since the coordinates are disjoint and the definition of $\mathbb A(Y)$ is absolute once $Y$ is present, this interleaving is an allowable forcing and is canonically isomorphic to the product.

For (iv), every countable initial segment of the $\omega_1$-length hybrid construction preserves $\omega_1$ by (ii). The usual argument from~\cite{HOFFELNER2023103292} then applies: each real in the final extension is added by some countable initial segment, and each such initial segment has size at most $\aleph_1$ and preserves $\omega_1$. Since the ground model satisfies $\CH$ and no countable initial segment produces more than $\aleph_1$ many reals, the final model still satisfies $\CH$.

Clauses (v)--(viii) are the code-management consequences of Lemma~\ref{lem:coding} together with the disjoint-coordinate convention for allowable constructions. Clause (v) is exactly the one-step coding lemma applied in the intermediate model at the given stage, after the independent reservoir coordinate for that stage has been added. For (vi), the projective decoding formula reads only the branch pattern and the almost-disjoint code on the block assigned to the tuple under consideration. Later forcing on disjoint blocks may add further codes, but it does not change the pattern on the old block and cannot make an unused disjoint block decode a prescribed tuple. This gives non-interference. Clause (vii) follows from the same locality of the decoding: once the relevant block has been written and the almost-disjoint coding real has been added, later allowable forcing uses fresh blocks and preserves $\omega_1$, so the $\Sigma_1$ definition of the coding-tree sequence and the decoded pattern remain unchanged. Finally, for (viii), well-order the stages of the construction. If $\Phi(t)$ first becomes true at a positive stage, then by non-interference that stage must be an explicit use of $\Code(t)$; limit stages add no new coding action except through their earlier stages. The preliminary model $W$ contains no branches through the Suslin-tree coding blocks reserved for the later construction and hence no already-decoded future tuple. Therefore every final instance of $\Phi(t)$ in the constructions considered here has a least explicit introduction stage.
\end{proof}

We shall repeatedly use the following consequence without further comment. Allowable forcings may be concatenated or interleaved, provided fresh coding coordinates and independent Cohen reservoirs are used. Such rearrangements do not change which tuples are decoded by $\Phi$, do not affect the projective complexity of the decoding predicate, and preserve $\omega_1$. This is the precise sense in which the later construction may freely alternate between iteration language and product language.

\section{The derivative operator and \texorpdfstring{$\infty$}{infinity}-allowable forcings}\label{sec:derivative}

We next isolate the fixed point of the preservation operation. Since the
argument constantly passes to subextensions of the mixed coding constructions,
we first fix the support terminology used to name those subextensions. The
point of the terminology is only locality: the same derivative construction will
be carried out not only over the initial model $W$, but also over intermediate
models obtained by restricting a mixed construction to an appropriate set of
coordinates.

\begin{definition}[Regular local supports]\label{def:local-supports}
Let $M$ be a transitive model, with the same ordinals as $W$, over which the
coding machinery of Section~\ref{sec:base} is being applied, and let
$\forceP_\beta$ be one of the mixed coding constructions over $M$. If
$A\subseteq\beta$, let $\forceP_A$ denote the inherited subconstruction using the
stages in $A$; in the hybrid presentation this means that at each retained stage
we keep the independent $\mathbb C(\omega_1)^L$ reservoir coordinate and, when
present, the subsequent almost-disjoint coding component, with all names
restricted to the retained coordinates. We call $A$ a \emph{regular support}
for $\forceP_\beta$ if this inherited subconstruction is a regular subforcing,
\[
   \forceP_A\lessdot \forceP_\beta.
\]
If $G_\beta$ is $\forceP_\beta$-generic over $M$, then
$G_A=G_\beta\cap\forceP_A$ is $\forceP_A$-generic over $M$, and $M[G_A]$ is the
local intermediate model in which names supported by $A$ are evaluated. Initial
segments are included as the special case $A=\alpha<\beta$.

A bookkeeping function for a presentation of an allowable forcing may mention
regular supports and nice names supported by them. We do not require arbitrary
presentations used below to be exhaustive; this convention is useful for closure
under products, quotients, and cones. A bookkeeping function is called
\emph{exhaustive} if it lists, cofinally often, every tuple consisting of a
countable regular support and nice names for the relevant reals and integer
parameters. The final $\omega_1$-length construction in
Section~\ref{sec:witness} will use exhaustive bookkeeping.
\end{definition}

With this terminology in place, the derivative is formulated locally. If $M$ is
one of the local intermediate models just described, the same recursive
definition can be carried out over $M$ using fresh reservoir and coding
coordinates. We write
\[
   \Gamma^M_\alpha
\]
for the class obtained over $M$ at stage $\alpha$ of the derivative. When
$M=W$ the superscript is omitted. Thus the notation $\Gamma^M_\infty$ below
means the eventual stable class of the hierarchy computed over $M$; it is not
important that the first stabilizing ordinal be the same for all local models.
All definitions are uniform in a code for the local presentation.

The role of the derivative is as follows. At a local stage the bookkeeping
presents reals $p,x,z$ and an index $m$. If there is a value $y$ such that
\[
   \varphi_m(x,y,p)
\]
is stable under all further forcings in the current derivative class, then $y$ is
a preservation witness and the construction may safely code $(p,x,y,m)$. If no
such value exists, the derivative hierarchy does not attempt to decide the
section. It merely codes the bookkeeping guess $z$. In the final witnessing
construction, a no-witness stage will additionally destroy the guessed value; if
a new preservation witness later appears, the old no-witness stage is used only
as a marker for the squaring argument.

The point to keep in mind is that a guessing code is never treated as evidence
that the guessed real belongs to $A_{m,p}(x)$. It records only that the local
preservation derivative was empty.

\begin{definition}[Local preservation witnesses]\label{def:local-witnesses}
Assume that $\Gamma^M_\delta$ has already been defined. For reals
$p,x\in M$ and $m<\omega$, put
\[
   Y^\delta_{M,x,p,m}
   =
   \left\{y\in M\cap\Real\;\middle|\;
   M\models \varphi_m(x,y,p)
   \text{ and }
   \forall \forceR\in\Gamma^M_\delta\;
      \forceR\Vdash_M \varphi_m(\check x,\check y,\check p)
   \right\}.
\]
The elements of $Y^\delta_{M,x,p,m}$ are the local
$\delta$-preservation witnesses for $(m,p,x)$ over $M$. If
$M=W[G_A]$, we also write $Y^\delta_{A,x,p,m}$.

If a nonempty preservation set has to be represented by one of its elements, we
choose the representative by the fixed well-order of names and conditions in the
ambient ground model. This is only a definability convention; no uniqueness of
preservation witnesses is assumed.
\end{definition}

In a presentation of a forcing in the derivative hierarchy we keep three
auxiliary records. The record $I$ contains preservation entries of the form
\[
   (A,p,x,y,m,\rho),
\]
meaning that, over the local model determined by the regular support $A$, the
real $y$ was chosen as a local $\rho$-preservation witness for $(m,p,x)$. The
record $J$ contains guessing entries of the same form, but with $y$ replaced by
the bookkeeping guess $z$; such an entry asserts only that the relevant
preservation set was empty at that local stage. Finally, $D$ contains
certificates of non-functionality, for instance two distinct recorded
preservation witnesses for the same triple $(m,p,x)$. The records are part of
the presentation and are used only in the verification.

\begin{definition}[Successor step]\label{def:gamma_succ}
Assume that the classes $\Gamma^N_\delta$ have been defined uniformly for all
local models $N$ arising from regular supports. A forcing belongs to
$\Gamma^{M}_{\delta+1}$ if it belongs to $\Gamma^M_\delta$ and admits a
presentation as a mixed allowable iteration, with records $I,J,D$, satisfying the
following rule at each nontrivial stage.

The bookkeeping at the stage presents a regular support $A$ and nice
$\forceP_A$-names for reals $p,x,z$ and for an integer $m$. After evaluating
these names in the local model $N=M[G_A]$, compute
\[
   Y^\delta_{N,x,p,m}
\]
in $N$. The stage first supplies an independent Cohen reservoir for fresh coding
coordinates. Then exactly one of the following alternatives is used.

\begin{enumerate}[label=\textup{(\alph*)}]
\item \textbf{Local preservation.}
If $Y^\delta_{N,x,p,m}\neq\emptyset$, let $y$ be the representative chosen by
the fixed well-order and force with
\[
   \Code(p,x,y,m).
\]
Add $(A,p,x,y,m,\delta)$ to $I$. If either
$Y^\delta_{N,x,p,m}$ contains two distinct reals, or a distinct preservation
witness for the same triple $(m,p,x)$ has already been recorded in $I$, add the
corresponding certificate to $D$.

\item \textbf{Pure guessing.}
If $Y^\delta_{N,x,p,m}=\emptyset$, force only with the coding forcing for the
bookkeeping guess,
\[
   \Code(p,x,z,m).
\]
Add the corresponding guessing entry to $J$. No destruction forcing is used in
the derivative hierarchy, and no element is added to $I$ in this case.
\end{enumerate}
At limit stages of an iteration we take the direct limit of the forcing and the
unions of the records.
\end{definition}

For a limit ordinal $\lambda$, define, locally over each $M$,
\[
   \Gamma^M_\lambda=\bigcap_{\alpha<\lambda}\Gamma^M_\alpha.
\]

\begin{lemma}\label{lem:definability}
For every ordinal $\alpha$, the relation $\forceP\in\Gamma^M_\alpha$ is
definable over the local model $M$, uniformly in $M$ and in the presentation.
\end{lemma}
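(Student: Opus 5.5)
I would prove the lemma by transfinite induction on $\alpha$, simultaneously for all local models $M$. The inductive hypothesis is a single formula $\psi_\alpha(\forceP,\pi,a)$ with the following property. For every local model $M=W[G_A]$ arising from a regular support, with $a$ a code for the local presentation, and every presentation $\pi$ of $\forceP$ in $M$, we have $\forceP\in\Gamma^M_\alpha$ iff $M\models\psi_\alpha(\forceP,\pi,a)$. Since $\alpha$ ranges over all ordinals, the claim cannot be a single formula for all $\alpha$ at once. I would read ``definable'' as definable from the parameter $\alpha$, and run the induction inside the ground model as a definition by transfinite recursion. Concretely, I would define over $W$ the class function $\alpha\mapsto\{(\text{code of }M,\text{presentation})\mid \forceP\in\Gamma^M_\alpha\}$ by a single recursion, and then show that each local model computes its own slice.

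\textbf{Base case.} $\Gamma^M_0$ is the class of allowable forcings over $M$. By Definition~\ref{def:allowable0}, membership asks for a bookkeeping function $F\in M$, disjoint fresh reservoir and coding coordinates, and the stage-by-stage clauses (padding or $\mathbb A(Y_\beta)$). All of these are first-order over $M$ in the parameters $\vec S$, which is $\Sigma_1$ over $H(\omega_2)^M$, and the constructible almost-disjoint family, which is definable in $L\subseteq M$. So $\Gamma^M_0$ is definable uniformly.

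\textbf{Limit step.} This is immediate: $\forceP\in\Gamma^M_\lambda$ iff $\forall\alpha<\lambda\ \psi_\alpha$, which is expressible through the recursion.

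\textbf{Successor step.} This is the substantive case. Membership in $\Gamma^M_{\delta+1}$ says that $\forceP\in\Gamma^M_\delta$ and that there is a presentation with records $I,J,D$ such that at every stage the case split of Definition~\ref{def:gamma_succ} is obeyed. The case split depends on whether $Y^\delta_{N,x,p,m}$ is empty, where $N=M[G_A]$ is a further local model. So I need uniform definability of the preservation sets. Unfolding Definition~\ref{def:local-witnesses}, $y\in Y^\delta_{N,x,p,m}$ iff
\[
N\models\varphi_m(x,y,p)\quad\text{and}\quad \forall\forceR\bigl(\forceR\in\Gamma^N_\delta\rightarrow \forceR\Vdash_N\varphi_m(\check x,\check y,\check p)\bigr).
\]
The first conjunct is plainly definable. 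In the second, the forcing relation for the fixed formula $\varphi_m$ is definable over $N$, and the clause $\forceR\in\Gamma^N_\delta$ is given by the induction hypothesis applied to $N$.

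To evaluate this inside $M$ rather than $N$, I would use that $N=M[G_A]$ with $\forceP_A\lessdot\forceP$. Statements about $N$ are then translated into $M$ via the forcing theorem for $\forceP_A$: a condition in $\forceP_A$ forces ``$\dot Y^\delta\neq\emptyset$'' over $M$. The stage rule is then expressed as a statement in $M$ about $\forceP_A$-names and the corresponding $\forceP$-names that decide the case. Uniformity in the code of the local model is exactly what makes the induction hypothesis applicable to $N$ from the viewpoint of $M$.

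\textbf{Main obstacle.} The hardest step is ensuring that the definition of $\Gamma^N_\delta$ quantifies only over set-sized or definable-class objects in the right model. The quantifier $\forall\forceR\in\Gamma^N_\delta$ ranges over a proper class of forcings. This is harmless for definability, since it is a class quantifier over $N$ with a definable matrix, but the recursion must be carried out uniformly over codes of presentations. My plan is to restrict to presentations of countable length with a bookkeeping function $F\in N$, as in Definition~\ref{def:allowable0}. Each allowable forcing is then coded by a set in $H(\omega_2)^N$, so the quantifier becomes a quantifier over $H(\omega_2)^N$, and the whole recursion is a legitimate definable transfinite recursion over $V$, restricted to each $N$.
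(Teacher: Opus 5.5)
Your proposal is correct and follows the paper's own argument. Both proceed by induction on $\alpha$: the base case is the definability of the allowable coding iterations, and the successor case quantifies over presentations, regular supports, nice names and the records $I,J,D$, with the test $Y^\delta_{N,x,p,m}$ supplied by the induction hypothesis applied to $N=M[G_A]$; limits are handled by intersection. Your additional points make explicit what the paper's short proof leaves implicit: you treat $\alpha$ as a parameter of a single transfinite recursion, you pull truth in $N$ back to $M$ via the forcing theorem for $\forceP_A$, and you bound the quantifier over $\Gamma^N_\delta$ by $H(\omega_2)^N$, as the paper's stabilization lemma also does.
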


\begin{proof}
The proof is by induction on $\alpha$. At level $0$ this is the definability of
the original allowable coding iterations. At a successor level, membership in
$\Gamma^M_{\delta+1}$ requires membership in $\Gamma^M_\delta$ together with the
local rule from Definition~\ref{def:gamma_succ}. The latter can be expressed by
quantifying over presentations, regular supports, nice names, and the records
$I,J,D$. The definition of $Y^\delta_{N,x,p,m}$ quantifies over
$\Gamma^N_\delta$ as computed in the local model $N=M[G_A]$, and this is
available by the induction hypothesis. At limit levels definability follows by
intersection.
\end{proof}

\begin{lemma}\label{lem:monotone}
If $\alpha<\beta$, then $\Gamma^M_\beta\subseteq\Gamma^M_\alpha$ for every local
model $M$. Equivalently, every $\beta$-allowable forcing over $M$ is
$\alpha$-allowable over $M$.
\end{lemma}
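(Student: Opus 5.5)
We prove the monotonicity by transfinite induction on $\beta$, keeping $\alpha$ fixed, simultaneously for all local models $M$ arising from regular supports. The definitions of the hierarchy are built so that each new level is cut out of the previous one: Definition~\ref{def:gamma_succ} requires explicitly that a member of $\Gamma^M_{\delta+1}$ already belongs to $\Gamma^M_\delta$, and limit levels are intersections. So the argument only has to chain these inclusions. It is not affected by the fact that the witness sets $Y^\delta_{N,x,p,m}$ themselves may change with $\delta$.

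First we establish the one-step inclusion $\Gamma^M_{\delta+1}\subseteq\Gamma^M_\delta$ for every $\delta$ and every local model $M$. This is immediate from the first clause of Definition~\ref{def:gamma_succ}. The extra presentation requirements (records $I,J,D$, and the choice between alternatives (a) and (b)) can only shrink the class further.

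Next, the induction on $\beta>\alpha$:
\begin{itemize}
\item If $\beta=\delta+1$, then either $\delta=\alpha$, and the one-step inclusion gives the claim, or $\delta>\alpha$. In the latter case the induction hypothesis gives $\Gamma^M_\delta\subseteq\Gamma^M_\alpha$, and we compose this with $\Gamma^M_{\delta+1}\subseteq\Gamma^M_\delta$.
\item If $\beta$ is a limit ordinal, then $\alpha<\beta$ is one of the indices in the intersection $\Gamma^M_\beta=\bigcap_{\gamma<\beta}\Gamma^M_\gamma$, so $\Gamma^M_\beta\subseteq\Gamma^M_\alpha$ directly.
\end{itemize}
The equivalent formulation in terms of $\beta$-allowable and $\alpha$-allowable forcings is just a restatement of the inclusion, with ``$\gamma$-allowable over $M$'' meaning membership in $\Gamma^M_\gamma$.

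The only point needing care is whether the successor-clause requirement ``belongs to $\Gamma^M_\delta$'' refers to the same class $\Gamma^M_\delta$ as in the hierarchy over $M$. It does not refer to some local variant $\Gamma^N_\delta$ over an intermediate model $N=M[G_A]$, which appears only inside the computation of $Y^\delta_{N,x,p,m}$. Once this is checked against the wording of the definition, nothing about the witness sets is used. Lemma~\ref{lem:definability} is not needed here, though it ensures that the classes being compared are well defined over $M$.
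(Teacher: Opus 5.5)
Your proof is correct and takes the same approach as the paper, which simply notes that the successor clause of Definition~\ref{def:gamma_succ} builds in $\Gamma^M_{\delta+1}\subseteq\Gamma^M_\delta$ and that limit levels are intersections. Your transfinite induction on $\beta$ just makes explicit how these one-step inclusions chain together.
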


\begin{proof}
This is immediate from the definition at successor stages and from the
intersection definition at limit stages.
\end{proof}

\begin{lemma}\label{lem:closure}
For every local model $M$ and every ordinal $\alpha$, the class
$\Gamma^M_\alpha$ is closed under regular subiterations, quotients over regular
supports, cones below conditions, countable concatenations, and countable
mixed-support products, provided fresh coding coordinates and independent Cohen
reservoirs are used. In particular:
\begin{enumerate}[label=\textup{(\roman*)}]
\item if $\forceP\in\Gamma^M_\alpha$ and $q\in\forceP$, then the cone
$\forceP\restriction q$ belongs to $\Gamma^M_\alpha$, after the harmless
renumbering of coordinates;
\item if $\langle\forceP^n:n<\omega\rangle$ is a sequence of forcings in
$\Gamma^M_\alpha$, then
\[
   \prod_{n<\omega}^{\mathrm{mix}}\forceP^n\in\Gamma^M_\alpha;
\]
\item if $\forceP\in\Gamma^M_\alpha$ and
\[
   \forceP\Vdash \dot\forceR\in\Gamma^{M[\dot G]}_\alpha,
\]
then $\forceP*\dot\forceR$ is again $\alpha$-allowable over $M$, after assigning
fresh coordinates to the second factor.
\end{enumerate}
\end{lemma}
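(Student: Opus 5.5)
The plan is a transfinite induction on $\alpha$, proving all closure properties at once and uniformly for all local models $M$. Uniformity is needed because the successor clause of Definition~\ref{def:gamma_succ} refers to $\Gamma^N_\delta$ for local models $N=M[G_A]$, which come from regular supports. The induction hypothesis at stage $\delta$ should therefore be the full statement of the lemma for $\Gamma^N_\delta$, for every local model $N$ simultaneously.

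At $\alpha=0$ everything reduces to the black-box facts of Section~\ref{sec:base}.
\begin{itemize}
\item Cones below a condition in a hybrid presentation are again hybrid presentations, after renumbering coordinates.
\item Countable mixed-support products follow from Lemma~\ref{lem:basic-preservation}(iii). Interleave the bookkeeping functions, take the reservoirs with countable support and the almost-disjoint components with finite support, and use Remark~\ref{rem:mixed-harmless} to see that the result is again a $\sigma$-closed factor followed by a ccc finite-support iteration.
\item Concatenation $\forceP*\dot\forceR$ is handled by shifting the stages of $\dot\forceR$ past those of $\forceP$. Names of $\dot\forceR$ become $\forceP_\beta$-names for the concatenated construction.
\item Regular subiterations and quotients over regular supports $A$ are exactly the inherited constructions of Definition~\ref{def:local-supports}. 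The quotient $\forceP_\beta/G_A$ is a hybrid construction over $M[G_A]$ whose names are reinterpreted there.
\end{itemize}
Limit stages are immediate, since $\Gamma^M_\lambda$ is an intersection and each closure operation preserves membership in each $\Gamma^M_\alpha$ for $\alpha<\lambda$.

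The successor step $\delta\to\delta+1$ is the real work. Given presentations witnessing membership in $\Gamma^M_{\delta+1}$, I would build the combined presentation by the same interleaving as at level $0$ and carry the records $I,J,D$ along as unions. Membership in $\Gamma^M_\delta$ holds by the induction hypothesis, so the point is to check the local rule at each nontrivial stage of the new presentation. Each such stage comes from one of the factors, with regular support $A$ and names evaluated in $N=M[G_A]$.

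The key observation is that the stage's decision depends only on $N$ and on $Y^\delta_{N,x,p,m}$. That set is computed in $N$ from $\Gamma^N_\delta$, and by Lemma~\ref{lem:definability} this computation is uniform in $N$. So I must check that $A$ is still a regular support in the combined construction, and that the local model it determines is the same model, or is canonically isomorphic to it. Then alternative (a) or (b) is applied exactly as before, and the certificates added to $D$ remain correct: a second distinct witness for the same $(m,p,x)$ only adds entries to $D$, and it is permitted to do so. Regularity of $A$ inside a product follows because the factors use disjoint coordinates, so $\forceP_A\lessdot\forceP^n\lessdot\prod^{\mathrm{mix}}_n\forceP^n$.

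For a cone $\forceP\restriction q$, the supports are unchanged and the local models are those below the restriction of $q$. For clause (iii), a stage of $\dot\forceR$ has a support that is regular over $M[G]$. I would take $\forceP$ together with that support as the new regular support over $M$. Its local model is $M[G][G_A]$, and the rule holds there by the hypothesis $\forceP\Vdash\dot\forceR\in\Gamma^{M[\dot G]}_{\delta+1}$.

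The main obstacle is this last point, together with quotients. The derivative set $Y^\delta$ computed over the local model $N=M[G][G_A]$ must be identical whether $N$ is viewed as a local model over $M[G]$ or as one over $M$. In other words, $\Gamma^{N}_\delta$ must be intrinsic to $N$ and not depend on the base from which $N$ was reached. I would prove this inside the induction as an explicit extra clause: $\Gamma^N_\delta$ depends only on $N$ together with its canonical coding structure, namely the $\Sigma_1$-definable tree sequence and the set of used blocks. This holds by Lemma~\ref{lem:definability}, because the definitions are uniform in a code for the local presentation, and by Lemma~\ref{lem:basic-preservation}(vi)--(vii), because freshness of coordinates depends only on which blocks already decode a tuple in $N$. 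If that clause holds, every closure operation just reindexes stages without changing any local decision, and the lemma follows.
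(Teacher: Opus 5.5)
Your plan has the same architecture as the paper's proof. It is a simultaneous induction on $\alpha$ over all local models, with level $0$ taken from the hybrid machinery and limits handled by intersection. At successor levels you transport presentations and records so that each stage keeps its regular support, hence its local model, hence its preservation/guessing decision. For cones, products and concatenations this check does go through. Your explicit clause that $\Gamma^N_\delta$ depends only on $N$ is exactly what the paper uses silently when it says that stages of the second factor ``satisfy the rule over $M[G]$''. (What it depends on is the set of coordinates already used, which the support fixes. It is not the set of blocks that decode a tuple, since padding stages use reservoirs without writing codes.)

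\textbf{The gap is the quotient clause.} You list quotients as part of the main obstacle, but your intrinsicness clause does not resolve them. A presentation over $M[G_A]$ can only run its tests in local models $M[G_A][G_{B'}]$, all of which contain $G_A$. So a stage of $\forceP/G_A$ whose original support $B$ does not contain $A$ must be retested in, at best, $M[G_{A\cup B}]$ rather than $M[G_B]$. Your criterion ``the local model it determines is the same model'' fails there. Intrinsicness is beside the point, because the problem is which model the test runs in, not how that model was reached. The decision can genuinely change, because preservation witnesses can appear in larger models.

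\emph{Example.} Let $\varphi_m(x,y,p)$ be the $\Pi^1_2$ formula $y\notin L$, with $p,x\in L$. Since $W\cap\Real=L\cap\Real$, we have $Y^\delta_{W,x,p,m}=\emptyset$ for every $\delta$. Hence the two-stage presentation whose stages both use support $\emptyset$ and perform guessing codes $\Code(p,x,z_0,m)$ and $\Code(p,x,z_1,m)$, with $z_0,z_1\in L$, lies in every $\Gamma^W_\alpha$. However, the almost-disjoint coding real $r$ added at stage $0$ is non-constructible and stays so in all further extensions. So $r\in Y^\delta_{N,x,p,m}$ for every local model $N$ over $W[G_1]$ and every $\delta$. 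The remaining stage writes $(p,x,z_1,m)$ with $z_1\in L$. It is therefore neither a legitimate guessing stage nor the code of a preservation representative, which would have to be non-constructible. Consequently the quotient by the initial segment $\{0\}$ is not in $\Gamma^{W[G_1]}_1$.

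Your reindexing argument establishes quotient closure only when $A$ is contained in the support of every stage outside $A$. The paper's one-line claim that $\mathcal E/A$ has ``the same local tests'' has the same defect, so you do not diverge from the paper here. It is nonetheless a step that fails as argued, and Lemma~\ref{lem:local-witnesses-for-graphs} and Lemma~\ref{lem:squaring} rely on this clause.
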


\begin{proof}
Let $\mathsf C_\alpha(M)$ denote the assertion of the lemma for the local model
$M$. We prove $\mathsf C_\alpha(M)$ by induction on $\alpha$, uniformly in
$M$. For $\alpha=0$ this is the closure property of the basic hybrid
construction, recorded in Lemma~\ref{lem:basic-preservation} and the paragraph
following it.

Assume $\mathsf C_\delta(N)$ for all local $N$, and put $\alpha=\delta+1$. For
$a=(m,p,x)$ write
\[
   Y^\delta_N(a)=Y^\delta_{N,x,p,m}.
\]
In each case below the induction hypothesis gives membership in
$\Gamma^M_\delta$; the only point is that the successor rule from
Definition~\ref{def:gamma_succ} is preserved.

Regular subiterations and quotients are immediate from the same presentation.
If $A$ is a regular support for a presentation $\mathcal E$ of $\forceP$, then
$\mathcal E\upharpoonright A$ and the quotient presentation $\mathcal E/A$ have
the same local tests $Y^\delta_N(a)$, computed in the corresponding regular
local models. Hence the preservation/guessing decision is unchanged.

\emph{(i) Cones.} Let $q\in\forceP$. Close $\operatorname{supp}(q)$ to a
countable regular support and put $\forceP_q=\forceP\restriction q$. The
presentation of $\forceP_q$ is the regular quotient presentation below $q$. At
every stage it computes the same $Y^\delta_N(a)$ as the original presentation,
with the extra condition $q\in G$. Thus the same successor decision is made,
and $\forceP_q\in\Gamma^M_{\delta+1}$.

\emph{(ii) Products.} Let
\[
   \forceQ=\prod_{n<\omega}^{\mathrm{mix}}\forceP^n,
   \qquad \forceP^n\in\Gamma^M_{\delta+1}.
\]
Choose presentations $\mathcal E_n$ and disjoint shift maps $\pi_n$ for stages,
reservoirs and coding coordinates. The product presentation is
$\mathcal E=\bigoplus_n\pi_n(\mathcal E_n)$, with
\[
   I^\forceQ=\bigcup_n\pi_n``I^{\forceP^n},\qquad
   J^\forceQ=\bigcup_n\pi_n``J^{\forceP^n}.
\]
The record $D^\forceQ$ contains the shifted old certificates, together with the
new certificates coming from two shifted preservation records for the same
triple $(m,p,x)$. If a shifted stage from the $n$-th factor is checked over
$N'=\pi_n(N)$, then, under the canonical shift isomorphism,
\[
   Y^\delta_{N'}(\pi_n(a))=\pi_n``Y^\delta_N(a).
\]
Thus the shifted stage makes exactly the same local decision as the original
stage, and $\forceQ\in\Gamma^M_{\delta+1}$.

\emph{(iii) Concatenations.} Suppose
$\forceP\Vdash\dot\forceR\in\Gamma^{M[\dot G]}_{\delta+1}$. Concatenate a
presentation $\mathcal E_\forceP$ of $\forceP$ with a $\forceP$-name
$\dot{\mathcal E}_{\dot\forceR}$ for a presentation of $\dot\forceR$, using
fresh coordinates in the second block. Stages in the first block satisfy the
successor rule over $M$; stages in the second block satisfy it over $M[G]$.
Hence $\forceP*\dot\forceR\in\Gamma^M_{\delta+1}$. Countable concatenations
follow by iterating this argument.

At a limit $\lambda$, apply the induction result to every $\xi<\lambda$ and
intersect the classes $\Gamma^M_\xi$.
\end{proof}

\begin{lemma}\label{lem:stabilization}
For every local model $M$, the decreasing sequence
$\langle\Gamma^M_\alpha\mid\alpha\in\Ord\rangle$ stabilizes. The stabilization
is uniform in the sense that the definition of the stable class is obtained by
the same recursion in every local model.
\end{lemma}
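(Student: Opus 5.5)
The natural route is a cardinality argument. A decreasing ordinal-indexed sequence of classes stabilizes once the classes are replaced by set-sized invariants that determine them. Lemma~\ref{lem:monotone} gives $\Gamma^M_\beta\subseteq\Gamma^M_\alpha$ for $\alpha<\beta$. So the plan is to find a set $X_M$ of size bounded in $M$ and, for each $\alpha$, a subset $T_\alpha\subseteq X_M$ such that $\Gamma^M_{\alpha+1}$ is computed from $\Gamma^M_\alpha$ only through $T_\alpha$.

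The natural choice is the \emph{preservation data}. For each local model $N=M[G_A]$ arising from a countable regular support $A$ of some mixed construction, and each triple $(m,p,x)$ from $N$, the successor rule of Definition~\ref{def:gamma_succ} consults $\Gamma^N_\delta$ only through the sets $Y^\delta_{N,x,p,m}$. Since allowable forcings have countable length, all such $N$ are parametrized, up to the canonical shift isomorphisms used in Lemma~\ref{lem:closure}, by countable presentations and nice names. These form a set $X_M$ of size $\le(2^{\aleph_1})^M$ definable over $M$, and the tests are uniform by Lemma~\ref{lem:definability}.

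First I would check, by induction using Lemma~\ref{lem:monotone} applied in each local model, that each $Y^\delta_{N,x,p,m}$ is \emph{non-increasing} in $\delta$. The key observation is that shrinking the class $\Gamma^N_\delta$ makes the universal quantifier in Definition~\ref{def:local-witnesses} weaker, so the sets actually grow. Hence the right invariant is the function
\[
   \delta\mapsto\langle Y^\delta_{N,x,p,m}\mid (N,m,p,x)\in X_M\rangle,
\]
which is monotone, increasing in $\delta$, into a product of subsets of $\Real\cap N$. A monotone sequence in a power set of a fixed set $S$ can change only $<|S|^+$ many times. So there is $\theta<((2^{2^{\aleph_0}})^+)^M$ with $Y^{\theta}=Y^{\theta+1}$ for all coordinates.

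Second, I would show that equality of the witness data at $\theta$ and $\theta+1$ in all local models forces $\Gamma^M_{\theta+2}=\Gamma^M_{\theta+1}$. The successor clause defining level $\theta+2$ from $\theta+1$ is then literally the same rule as the one defining $\theta+1$ from $\theta$. A presentation witnessing membership in $\Gamma^M_{\theta+1}$ therefore also satisfies the $(\theta+1)$-rule. Iterating this, and taking intersections at limits, gives $\Gamma^M_\alpha=\Gamma^M_{\theta+1}$ for all $\alpha>\theta$.

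The main obstacle is the simultaneity across local models. The data for $M$ at level $\delta$ depend on $\Gamma^N_\delta$ for the extensions $N$, which in turn depend on their own extensions. The stabilization of $M$'s sequence therefore cannot be argued for $M$ alone. I would handle this by running the monotonicity/cardinality argument for the whole uniformly definable family at once. By Lemma~\ref{lem:definability}, this family is coded inside $M$ by names over $M$, so one joint ordinal $\theta$ bounded by a cardinal of $M$ works for all coordinates simultaneously. Uniformity then comes from the fact that this recursion is the same formula evaluated in every local model.
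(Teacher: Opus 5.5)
Your argument is correct, but it takes a different route from the paper.

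\textbf{The paper's route.} The paper argues directly on the classes. Every forcing considered over $M$ is coded by a member of a fixed set of $M$: hereditary size at most $\aleph_1^M$, with presentations built from the fixed apparatus and subsets of $\omega_1^M$. So $\langle\Gamma^M_\alpha\mid\alpha\in\Ord\rangle$ is a decreasing sequence of subsets of a set. It can drop at only set-many stages and is therefore eventually constant. This is done separately for each $M$, and the paper explicitly does not require the stabilizing ordinals to agree across local models.

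\textbf{Your route.} You track the witness data $Y^\delta_N$, which are non-decreasing in $\delta$; your first sentence says ``non-increasing'', a slip you correct immediately. You find one ordinal $\theta$ at which these data do not change in any local model of a family closed under passing to further local models. You then use the fact that the successor rule of Definition~\ref{def:gamma_succ} consults $\Gamma^N_\delta$ only through $Y^\delta_N$. This propagates $\Gamma^N_{\theta+1}=\Gamma^N_{\theta+2}=\cdots$ through all local models at once, with limits handled by intersection.

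\textbf{Extra care your route needs.}
\begin{itemize}
\item $Y^\delta_N$ lives in a generic extension, so it must be represented inside $M$ through names and the forcing relation; this is where Lemma~\ref{lem:definability} enters.
\item The indexing family must be closed under iterated local extensions, which uses concatenation closure.
\item Your remark that stabilization ``cannot be argued for $M$ alone'' holds only for your choice of invariant. Counting the drops of $\Gamma^M_\alpha$ itself already works model by model. Detecting stabilization by a single non-change point of the data is what forces the simultaneous argument.
\end{itemize}

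\textbf{What each approach buys.} Your approach gives a uniform stabilizing ordinal for the whole family. It also gives the explicit fixed-point property: the stable class $\Gamma^M_\infty$ satisfies the successor rule with $Y^\infty_N$ computed from the stable classes $\Gamma^N_\infty$ of the local models. That is exactly how $\Gamma_\infty$ and $Y^\infty$ are used in Section~\ref{sec:witness}. The paper's per-model counting argument is much shorter. It would, however, need an additional supremum over the set-many (name-parametrized) local models to recover that fixed-point property.
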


\begin{proof}
In a fixed local model $M$, all forcings under consideration are coded by members
of a fixed set of $M$: their hereditary size is at most $\aleph_1^M$, and their
presentations use only the fixed coding apparatus and subsets of $\omega_1^M$.
Thus $\langle\Gamma^M_\alpha\mid\alpha\in\Ord\rangle$ is a decreasing sequence of
subsets of a set. It cannot be strictly decreasing through all ordinals. The
recursion defining the sequence is the same in all local models, which gives the
claimed uniformity.
\end{proof}

\begin{definition}\label{def:infty}
For a local model $M$, let $\Gamma^M_\infty$ denote the stable value of the local
hierarchy over $M$. A forcing is called \emph{$\infty$-allowable over $M$} if it
belongs to $\Gamma^M_\infty$. If $M=W$, we simply say \emph{$\infty$-allowable}.
\end{definition}

\begin{lemma}[Fixed-point dichotomy]\label{lem:dichotomy}
Let $M$ be a local model and let $p,x\in M\cap\Real$ and $m<\omega$. Exactly one
of the following alternatives holds in $M$.
\begin{enumerate}[label=\textup{(\roman*)}]
\item \textbf{Preservation.} There is a real $y\in M$ such that every
$\forceR\in\Gamma^M_\infty$ forces
$\varphi_m(\check x,\check y,\check p)$.
\item \textbf{Pointwise destruction.} For every real $z\in M$, either
$M\models\neg\varphi_m(x,z,p)$, or there is a forcing
$\forceR_z\in\Gamma^M_\infty$ such that
\[
   \forceR_z\Vdash_M\neg\varphi_m(\check x,\check z,\check p).
\]
\end{enumerate}
\end{lemma}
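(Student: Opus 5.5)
The plan is to obtain the dichotomy almost purely logically, by negating alternative (i) and then applying the cone closure of Lemma~\ref{lem:closure}(i) at the fixed point. Two preliminary facts about $\Gamma^M_\infty$ come first. First, the trivial forcing belongs to every $\Gamma^M_\alpha$. At level $0$ it is the allowable presentation of length $0$. At successor levels the empty presentation has no nontrivial stages, so the rule of Definition~\ref{def:gamma_succ} holds vacuously. At limits one intersects. Hence by Lemma~\ref{lem:stabilization} the trivial forcing lies in $\Gamma^M_\infty$, so this class is nonempty. Second, $\Gamma^M_\infty=\Gamma^M_\alpha$ for some $\alpha$ by stabilization, so Lemma~\ref{lem:closure}(i) applies to $\Gamma^M_\infty$ itself: if $\forceR\in\Gamma^M_\infty$ and $q\in\forceR$, then $\forceR\restriction q\in\Gamma^M_\infty$.

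\emph{Exclusivity.} Suppose (i) holds with witness $y$. Since the trivial forcing is in $\Gamma^M_\infty$ and forces $\varphi_m(\check x,\check y,\check p)$, we get $M\models\varphi_m(x,y,p)$. So the first disjunct of (ii) fails for $z=y$. Then (ii) would yield some $\forceR_y\in\Gamma^M_\infty$ forcing $\neg\varphi_m(\check x,\check y,\check p)$. But by (i) the same $\forceR_y$ also forces $\varphi_m(\check x,\check y,\check p)$, and a nontrivial forcing cannot force both. So (i) and (ii) cannot hold simultaneously.

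\emph{Exhaustiveness.} Suppose (i) fails, and fix a real $z\in M$ with $M\models\varphi_m(x,z,p)$; we must produce $\forceR_z$. Since (i) fails for $y=z$, there is some $\forceR\in\Gamma^M_\infty$ which does not force $\varphi_m(\check x,\check z,\check p)$. By the definition of the forcing relation, some condition $q\in\forceR$ forces $\neg\varphi_m(\check x,\check z,\check p)$. Put $\forceR_z=\forceR\restriction q$. By the cone clause, $\forceR_z\in\Gamma^M_\infty$ after the harmless renumbering of coordinates, and every generic for $\forceR_z$ contains $q$. Hence $\forceR_z\Vdash_M\neg\varphi_m(\check x,\check z,\check p)$, which is (ii).

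The main point requiring care is the cone step. One has to check that the renumbered cone is still a member of the \emph{stable} class, not merely of some $\Gamma^M_\alpha$. This is why stabilization is invoked to identify $\Gamma^M_\infty$ with a single level $\Gamma^M_\alpha$, to which Lemma~\ref{lem:closure}(i) applies directly. One must also check that forcing statements about the ground-model reals $x,z,p$ are unaffected by the renumbering. Both points are routine given the earlier lemmas.
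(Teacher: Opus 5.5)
Your proof is correct and follows the paper's argument. You negate (i), take a forcing in $\Gamma^M_\infty$ that does not force $\varphi_m(\check x,\check z,\check p)$, choose a condition $q$ forcing the negation, and pass to the cone $\forceR\restriction q$ via Lemma~\ref{lem:closure}(i) at the stabilization level. Your explicit exclusivity argument, using that the trivial forcing lies in $\Gamma^M_\infty$ so that (i) yields $M\models\varphi_m(x,y,p)$, fills in what the paper dismisses as ``by definition''.
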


\begin{proof}
The first alternative says precisely that $Y^\infty_{M,x,p,m}\neq\emptyset$. If
it fails, fix $z\in M$. If $M\models\neg\varphi_m(x,z,p)$, there is nothing to
prove. Otherwise $M\models\varphi_m(x,z,p)$, but $z$ is not preserved by all
$\infty$-allowable continuations. Hence there is
$\forceP\in\Gamma^M_\infty$ such that
$\one_\forceP$ does not force $\varphi_m(\check x,\check z,\check p)$. Choose a
condition $q\in\forceP$ with
\[
   q\Vdash_\forceP\neg\varphi_m(\check x,\check z,\check p).
\]
By the cone-closure clause of Lemma~\ref{lem:closure},
$\forceP\restriction q$ belongs to $\Gamma^M_\infty$ and forces the displayed
negation. The two alternatives are mutually exclusive by definition.
\end{proof}

We shall use the following standard absoluteness observation without further
comment. If $M\subseteq N$ are transitive models of $\ZFC$ with the same
ordinals, $N$ is a forcing extension of $M$, and $a\in M$ is a real parameter,
then every $\Sigma^1_3(a)$ statement true in $M$ remains true in $N$;
equivalently, every $\Pi^1_3(a)$ statement true in $N$ was already true in $M$.
This is the usual consequence of Shoenfield absoluteness, after writing a
$\Sigma^1_3$ statement as $\exists u\,\psi(u,a)$ with $\psi$ of complexity
$\Pi^1_2$.

\begin{lemma}\label{lem:local-witnesses-for-graphs}
Let $\forceP_\theta$ be an $\infty$-allowable iteration over a local model, with
the records described above, and let $G_\theta$ be generic. In the corresponding
extension the following hold.
\begin{enumerate}[label=\textup{(\arabic*)}]
\item If $(A,p,x,y,m,\rho)\in I_\theta$, where $\rho$ is either an ordinal stage
or the stable symbol $\infty$, then
\[
   \varphi_m(x,y,p).
\]
\item If $D_\theta$ contains a certificate consisting of two distinct recorded
preservation witnesses for the same triple $(m,p,x)$, then $A_{m,p}$ is not the
graph of a function.
\item Consequently, if $A_{m,p}$ is the graph of a function in the extension,
then for each $x$ there is at most one $y$ occurring in an entry
$(A,p,x,y,m,\rho)\in I_\theta$.
\end{enumerate}
\end{lemma}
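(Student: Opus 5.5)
The plan is to prove (1) first, derive (2) from it, and obtain (3) as a direct consequence of (2).

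For (1), fix an entry $(A,p,x,y,m,\rho)\in I_\theta$. I would trace it back to the stage $\beta<\theta$ that created it under clause (a) of Definition~\ref{def:gamma_succ}. At that stage the bookkeeping gave a regular support $A$, and $y$ was chosen in $Y^\rho_{N,x,p,m}$ with $N=M[G_A]$. By Definition~\ref{def:local-witnesses}, $N\models\varphi_m(x,y,p)$, and every $\forceR\in\Gamma^N_\rho$ forces $\varphi_m(\check x,\check y,\check p)$ over $N$.

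The next step is to show that the whole extension $M[G_\theta]$ is reached from $N$ by a single forcing in $\Gamma^N_\rho$. Because $A$ is a regular support, $\forceP_\theta$ factors as $\forceP_A*(\forceP_\theta/\dot G_A)$. I would apply the quotient-over-regular-supports clause of Lemma~\ref{lem:closure} to the $\infty$-allowable forcing $\forceP_\theta$. Since $\Gamma^N_\infty\subseteq\Gamma^N_\rho$ by Lemma~\ref{lem:monotone}, this places the quotient in $\Gamma^N_\infty\subseteq\Gamma^N_\rho$. When $\rho=\infty$ this is immediate; for an ordinal $\rho$ only monotonicity is needed.

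The main obstacle is checking that the closure lemma really computes the quotient relative to the local model $N$ and its local class $\Gamma^N$. It must not be computed relative to $M$. The stages before and after $\beta$ that lie outside $A$ are still presented as allowable stages over $N$, so their local tests transfer under the canonical identification of supports. I would argue this exactly as in the quotient paragraph of the proof of Lemma~\ref{lem:closure}. Once that is in place, the quotient forces $\varphi_m(x,y,p)$ over $N$, and so $M[G_\theta]\models\varphi_m(x,y,p)$. This is the $\Pi^1_3$ upward preservation, and it is the only nonroutine step.

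For (2), suppose a certificate in $D_\theta$ consists of entries $(A,p,x,y,m,\rho)$ and $(A',p,x,y',m,\rho')$ in $I_\theta$ with $y\ne y'$. By (1), both $\varphi_m(x,y,p)$ and $\varphi_m(x,y',p)$ hold in the final extension. Hence $(x,y),(x,y')\in A_{m,p}$ with $y\neq y'$, so $A_{m,p}$ is not the graph of a function. Certificates coming from a set $Y^\delta_{N,x,p,m}$ that contains two distinct reals are handled the same way. In that case both reals are $\delta$-preservation witnesses over the same $N$, and the argument of (1) applies to each of them.

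For (3), argue by contraposition. Suppose two entries for the same $(m,p,x)$ carry distinct values $y\neq y'$. By (1), both pairs $(x,y)$ and $(x,y')$ lie in $A_{m,p}$ in the extension, so $A_{m,p}$ fails to be functional. This uses only (1), not the bookkeeping of $D_\theta$. The rule in clause (a) ensures that such a pair is in any case recorded in $D_\theta$, so the conclusion also follows directly from (2).
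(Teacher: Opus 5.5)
Your proposal is correct and is essentially the paper's argument. You trace the entry back to the local model $N=M[G_A]$ in which $y$ was chosen as a $\rho$-preservation witness, use the quotient clause of Lemma~\ref{lem:closure} together with Lemma~\ref{lem:monotone} to see that the rest of $\forceP_\theta$ is a $\Gamma^N_\rho$-continuation over $N$ and hence forces $\varphi_m(x,y,p)$, and then obtain (2) and (3) from (1) — the only cosmetic difference being that you derive (3) directly from (1) instead of also invoking the record $D_\theta$, which is equally valid.
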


\begin{proof}
For (1), suppose that the entry was added at some stage over the local model
$N$. There $y$ was chosen so that every $\Gamma^N_\rho$-allowable continuation
forces $\varphi_m(x,y,p)$; for $\rho=\infty$ this means membership in the stable
class $\Gamma^N_\infty$. By Lemma~\ref{lem:closure}, the quotient from $N$ to any
later regular support, and in particular to the ambient extension under
consideration, is such a continuation. Hence $\varphi_m(x,y,p)$ holds there.

For (2), apply (1) to the two distinct preservation witnesses. They give two
distinct elements of the same vertical section $A_{m,p}(x)$. Clause (3) is the
contrapositive of this observation together with the definition of the record
$D$.
\end{proof}

\begin{lemma}\label{lem:final-persistence}
Let $\forceP_{\omega_1}$ be an $\omega_1$-length construction such that every
countable initial segment is $\infty$-allowable, and let $G_{\omega_1}$ be
generic. Let $I$ and $D$ be the unions of the corresponding records.
\begin{enumerate}[label=\textup{(\arabic*)}]
\item If $(A,p,x,y,m,\rho)\in I$, then
\[
   W[G_{\omega_1}]\models\varphi_m(x,y,p).
\]
\item If $D$ contains a certificate consisting of two distinct preservation
witnesses for the same $(m,p,x)$, then $A_{m,p}$ is not the graph of a function in
$W[G_{\omega_1}]$.
\end{enumerate}
\end{lemma}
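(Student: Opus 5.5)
The plan is to reduce both clauses to the countable case, Lemma~\ref{lem:local-witnesses-for-graphs}, by a reflection argument along the $\omega_1$-length construction.

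For (1), fix an entry $(A,p,x,y,m,\rho)\in I$. Since $I$ is the union of the records of the countable initial segments, the entry was added at some stage $\beta<\omega_1$. It was added over the local model $N=W[G_A]$, with $A$ a countable regular support contained in $\beta$, and $y$ was chosen so that every forcing in $\Gamma^N_\rho$ forces $\varphi_m(\check x,\check y,\check p)$.

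The difficulty is that the full tail from $N$ to $W[G_{\omega_1}]$ has length $\omega_1$. It is therefore not itself one of the countable forcings in $\Gamma^N_\rho$, so we cannot quote the preservation property directly. I would argue by contradiction. Suppose $W[G_{\omega_1}]\models\neg\varphi_m(x,y,p)$. This is a $\Sigma^1_3(x,y,p)$ statement, so it is witnessed by a single real $u$ with $\psi(u,x,y,p)$, where $\psi$ is $\Pi^1_2$. Each real of the final model is added by some countable initial segment; this is the same fact used in Lemma~\ref{lem:basic-preservation}(iv), coming from the finite/countable support structure and ccc-ness of the coding part. So $u\in W[G_\gamma]$ for some countable $\gamma\geq\beta$.

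Shoenfield absoluteness between $W[G_\gamma]$ and $W[G_{\omega_1}]$, which have the same ordinals, gives $\psi(u,x,y,p)$ in $W[G_\gamma]$. Hence $W[G_\gamma]\models\neg\varphi_m(x,y,p)$. Now $W[G_\gamma]$ is an extension of $N$ by the quotient $\forceP_\gamma/A$. Because $\forceP_\gamma$ is $\infty$-allowable, Lemma~\ref{lem:closure} (quotients over regular supports) shows this quotient lies in $\Gamma^N_\infty$. By Lemma~\ref{lem:monotone}, $\Gamma^N_\infty\subseteq\Gamma^N_\rho$. So the quotient forces $\varphi_m(x,y,p)$, which is a contradiction. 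Equivalently, this step is clause (1) of Lemma~\ref{lem:local-witnesses-for-graphs} applied to $\forceP_\gamma$, followed by upward $\Sigma^1_3$ persistence of the failure.

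The main obstacle I expect is exactly this reflection step. One must verify that the quotient from the local support $A$ to the initial segment $\gamma$ is a legitimate member of $\Gamma^N_\infty$ as computed in $N$. Here I would lean on the uniformity of the local derivative in Lemma~\ref{lem:definability} and the quotient clause of Lemma~\ref{lem:closure}. One must also verify that every real of the final model appears at a countable stage. For the latter I would use the front-loaded presentation of Remark~\ref{rem:mixed-harmless}, applied to initial segments of length ${<}\omega_1$, so that nice names for reals have countable support.

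For (2), the certificate in $D$ involves two distinct reals $y\neq y'$ recorded in $I$ for the same $(m,p,x)$. Applying (1) to both entries gives $(x,y),(x,y')\in A_{m,p}$ in $W[G_{\omega_1}]$. Since the formula defining $A_{m,p}$ is evaluated in $W[G_{\omega_1}]$ itself, its vertical section at $x$ contains two points. Hence $A_{m,p}$ is not the graph of a function there.
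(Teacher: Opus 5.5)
Your proposal is correct and is essentially the paper's proof: assume $\neg\varphi_m(x,y,p)$ in $W[G_{\omega_1}]$, take a real witness $u$ for this $\Sigma^1_3$ statement, and choose a countable stage $\gamma$ with $u\in W[G_\gamma]$. Shoenfield then makes $\neg\varphi_m(x,y,p)$ true in $W[G_\gamma]$, contradicting Lemma~\ref{lem:local-witnesses-for-graphs}, which is exactly your quotient-in-$\Gamma^N_\infty\subseteq\Gamma^N_\rho$ argument; (2) follows by applying (1) to both witnesses.

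One small wording point: in your ``equivalently'' remark the relevant step is not upward $\Sigma^1_3$ persistence but downward reflection of the failure via its real witness $u$, which your main argument already handles correctly.
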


\begin{proof}
For (1), let the preservation witness be recorded at a countable stage
$\alpha<\omega_1$. By Lemma~\ref{lem:local-witnesses-for-graphs}, every
countable later initial segment $W[G_\beta]$, $\beta>\alpha$, satisfies
$\varphi_m(x,y,p)$. If the final model satisfied
$\neg\varphi_m(x,y,p)$, then this $\Sigma^1_3$ statement would have a real witness
$u$ in the final model. Every real in the final extension appears in some
countable initial segment, so choose $\beta>\alpha$ with
$u\in W[G_\beta]$. Writing $\neg\varphi_m$ as $\exists u\,\psi(u,x,y,p)$ with
$\psi$ of complexity $\Pi^1_2$, Shoenfield absoluteness would make the same
witness $u$ verify $\neg\varphi_m(x,y,p)$ in $W[G_\beta]$, a contradiction.
Clause (2) follows by applying (1) to the two recorded witnesses.
\end{proof}

\section{The witnessing universe}\label{sec:witness}

We now run the final construction. This is an $\omega_1$-length construction
over $W$. It is not, literally, an element of $\Gamma^W_\infty$, since the
members of $\Gamma^W_\infty$ have countable length. Rather, every countable
initial segment is $\infty$-allowable. This is the formulation needed for the
preservation and cardinal arguments recalled in Section~\ref{sec:base}.

The final construction uses exhaustive bookkeeping in the sense of
Definition~\ref{def:local-supports}. Thus every countable regular support,
together with nice names for $p,x,z$ and $m$, is considered cofinally often. At
each stage fresh coding coordinates and an independent Cohen reservoir are used.
The construction maintains records $I,J,D$ as above. A code introduced from a
preservation action is recorded in $I$; a code introduced from a no-witness stage
is recorded in $J$.

Suppose stage $\beta$ presents a regular support $A\subseteq\beta$ and local data
$p,x,z,m$ evaluated in $M=W[G_A]$. Compute $Y^\infty_{M,x,p,m}$ in $M$.

\begin{enumerate}[label=\textup{(\Alph*)}]
\item \textbf{Preservation action.}
If $Y^\infty_{M,x,p,m}\neq\emptyset$, choose the representative
$y\in Y^\infty_{M,x,p,m}$. Add $(A,p,x,y,m,\infty)$ to $I$ and force with
$\Code(p,x,y,m)$ unless the same code has already been introduced.

If a distinct preservation witness for the same triple $(m,p,x)$ has already
been recorded, add the corresponding certificate to $D$. If a guessing entry
$(B,p,x,w,m,\infty)$ for the same triple has already been recorded, then the
following distinction is made. If $w=y$, the earlier guessing-destruction action
has already made $\neg\varphi_m(x,w,p)$ persistent, so this case cannot occur
with $y\in Y^\infty_{M,x,p,m}$. If $w\neq y$, append the squaring continuation
supplied by Lemma~\ref{lem:squaring} below and record the corresponding
non-functionality certificate in $D$.

\item \textbf{Guessing-destruction action.}
If $Y^\infty_{M,x,p,m}=\emptyset$, first introduce the guessing code
\[
   \Code(p,x,z,m)
\]
and add the corresponding entry $(A,p,x,z,m,\infty)$ to $J$. Then use the
fixed-point dichotomy. If $M\models\neg\varphi_m(x,z,p)$, append the trivial
forcing. Otherwise choose, by the fixed well-order, an $\infty$-allowable
forcing $\forceR_z\in\Gamma^M_\infty$ such that
\[
   \forceR_z\Vdash_M\neg\varphi_m(\check x,\check z,\check p),
\]
and append this local destruction forcing with fresh coordinates.
\end{enumerate}

\begin{lemma}\label{lem:final-initial-segments}
Every countable initial segment of the final construction is $\infty$-allowable.
\end{lemma}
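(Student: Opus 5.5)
We argue by induction on $\beta<\omega_1$ that the initial segment $\forceP_\beta$ of the final construction lies in $\Gamma^W_\infty$. Since $\Gamma^W_\infty$ is a fixed point, it suffices to show $\forceP_\beta\in\Gamma^W_\alpha$ for every $\alpha$. By stabilization (Lemma~\ref{lem:stabilization}) we may instead fix an $\alpha$ beyond the stabilizing ordinal for all relevant local models, so that $\Gamma^N_\alpha=\Gamma^N_\infty$. It then suffices to show that $\forceP_\beta\in\Gamma^W_{\alpha+1}$. We do this by exhibiting a presentation obeying the successor rule of Definition~\ref{def:gamma_succ} at level $\alpha$, in which every stage is built from pieces already in $\Gamma_\infty$.

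Limit stages $\beta$ are handled by the direct-limit clause of Definition~\ref{def:gamma_succ}. Countable concatenation closure in Lemma~\ref{lem:closure} covers unions of presentations whose initial segments are already presented, so limits follow from the induction hypothesis. For a successor $\beta+1$, we use Lemma~\ref{lem:closure}(iii): it is enough that the $\beta$-th block, viewed over $W[G_\beta]$, lies in $\Gamma^{W[G_\beta]}_\infty$. The local tests at that block are evaluated in $M=W[G_A]$ with $A\subseteq\beta$ a regular support, and the block, viewed over $W[G_\beta]$, is the quotient situation covered by Lemma~\ref{lem:closure}. We then split according to the action taken. In case (A) with $Y^\infty_M\neq\emptyset$, the representative $y$ is exactly the witness the successor rule demands at level $\alpha$, because $Y^\alpha_M=Y^\infty_M$. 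So the single stage $\Code(p,x,y,m)$ with its record in $I$ and any certificate in $D$ is a legal rule-(a) stage. In case (B) with the set empty, the guessing code $\Code(p,x,z,m)$ is a legal rule-(b) stage. The appended destruction forcing $\forceR_z$ belongs to $\Gamma^M_\infty$ by the choice made via Lemma~\ref{lem:dichotomy}. After shifting to fresh coordinates, it is $\infty$-allowable over the later model by the quotient/regular-support clause and by concatenation, Lemma~\ref{lem:closure}(iii). So the block is a concatenation of members of $\Gamma_\infty$.

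The main obstacle is the extra material appended in case (A) when an earlier guessing entry with $w\neq y$ exists: the squaring continuation of Lemma~\ref{lem:squaring}. That lemma is stated later in the paper, so we will rely on it only for the fact that it produces a forcing in $\Gamma^{N}_\infty$ over the relevant local model, built as a mixed product of two copies of an $\infty$-allowable forcing. Closure under countable mixed products and cones, Lemma~\ref{lem:closure}(i),(ii), then places it in the class. We will verify that no circularity arises, since the squaring lemma only invokes the closure properties of $\Gamma_\infty$ and not the present lemma. A second delicate point is the subcase $w=y$ in (A). There we must check that the earlier destruction really makes $\neg\varphi_m(x,y,p)$ persist along $\infty$-allowable continuations. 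This follows by the argument of Lemma~\ref{lem:final-persistence}(1), because a $\Sigma^1_3$ statement is upward absolute. Hence $y\notin Y^\infty_M$ and the subcase is vacuous, so no illegal stage is ever produced.
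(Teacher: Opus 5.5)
Your case analysis is the same as the paper's: induction on the length, then a preservation stage, a guessing stage followed by the appended destruction forcing, the squaring continuation via products and quotients, and limits via concatenation closure. The gap is in the reduction that is supposed to make each case go through.

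By Definition~\ref{def:gamma_succ}, $\forceP_\beta\in\Gamma^W_{\alpha+1}$ means that $\forceP_\beta\in\Gamma^W_\alpha$ \emph{and} some presentation obeys the rule at level $\alpha$. Once $\alpha$ is past stabilization, the first conjunct is exactly the statement you are trying to prove. So ``exhibiting a presentation obeying the successor rule at level $\alpha$'' establishes nothing by itself.

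The same circularity reappears in your successor step. You call $\Code(p,x,y,m)$ a legal rule-(a) stage because $Y^\alpha_M=Y^\infty_M$, but that checks only the stable rule. Membership in $\Gamma_\infty$ requires, by induction on $\delta$, a presentation in which the stage is legal at \emph{every} level $\delta$.

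For guessing stages this is harmless, since $Y^\delta_M\subseteq Y^\infty_M=\emptyset$ for all $\delta$. For preservation stages it is not automatic. As $\Gamma^M_\delta$ shrinks, $Y^\delta_M$ grows. If $Y^\delta_M=\emptyset$ you can present the guess $z:=y$, and if $y\in Y^\delta_M$ the representative is $y$. But if at some $\delta$ below stabilization $Y^\delta_M$ is nonempty and omits the $\infty$-representative $y$, then rule (a) at level $\delta$ demands coding an element of $Y^\delta_M$, which is not $y$. In that case the stage is not level-$\delta$ legal in the evident presentation.

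You therefore need an additional argument that appending a stage obeying the stable rule to a member of $\Gamma_\infty$ keeps you inside $\Gamma_\infty$. For instance, you could produce, for each $\delta$, a presentation of that stage that is legal at level $\delta$. The paper's proof passes over this same point with the phrase ``a coding step allowed by the stable local rule,'' so you are no worse off than the paper. Still, the reduction as you state it is not a valid proof step.

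Some smaller points:
\begin{enumerate}[label=(\arabic*)]
\item The squaring step does rest on the present lemma. Lemmas~\ref{lem:copying-tails} and~\ref{lem:squaring} need the tail $\mathbb T$ to lie in $\Gamma^M_\infty$. You get this from the induction hypothesis on shorter initial segments together with quotient closure. Also, the copy $\mathbb T_1$ over $M[H_0]$ is a quotient of $\mathbb T\times\mathbb T_1$, not a cone. It is the induction, not independence from this lemma, that removes the circularity.
\item The subcase $w=y$ is irrelevant to allowability: if it occurred, the stage would still be an ordinary rule-(a) coding. Your argument for its vacuity also does not work as written. Lemma~\ref{lem:final-persistence}(1) concerns preservation records, and upward absoluteness transfers $\neg\varphi_m(x,w,p)$ only to models containing the destruction generic, not to an arbitrary local model $W[G_A]$.
\item Lemma~\ref{lem:stabilization} does not provide a single stabilization ordinal for all relevant local models; that needs a short bounding argument.
\end{enumerate}
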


\begin{proof}
The proof is by induction on the length of the initial segment. At a
preservation stage we use a coding step allowed by the stable local rule. At a
no-witness stage the initial coding is the pure guessing step allowed by
Definition~\ref{def:gamma_succ} at the stabilized level, and the subsequently
appended destruction forcing belongs to $\Gamma^M_\infty$ over the relevant local
model $M$. At a squaring stage the forcing is the quotient of a product of two
$\infty$-allowable copies of the relevant tail, hence is $\infty$-allowable by
Lemma~\ref{lem:closure}. Countable limits and countable concatenations are
handled by Lemma~\ref{lem:closure}.
\end{proof}

\begin{lemma}\label{lem:old-reals-not-protected}
Let $M$ be a local model and suppose that $Y^\infty_{M,x,p,m}=\emptyset$ in $M$.
Let $\mathbb T\in\Gamma^M_\infty$ and let $H$ be $\mathbb T$-generic over $M$.
Then no real $r\in M$ is an $\infty$-preservation witness for $(m,p,x)$ in
$M[H]$.
\end{lemma}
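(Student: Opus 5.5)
We argue by contradiction: an $\infty$-preservation witness $r\in M$ over $M[H]$ can be pulled back through $\mathbb T$ to a preservation witness over $M$, contradicting $Y^\infty_{M,x,p,m}=\emptyset$. Suppose $r\in M$ belongs to $Y^\infty_{M[H],x,p,m}$. Then $M[H]\models\varphi_m(x,r,p)$, and every $\forceR\in\Gamma^{M[H]}_\infty$ forces $\varphi_m(\check x,\check r,\check p)$ over $M[H]$. Since $\varphi_m$ is $\Pi^1_3$ and $M\subseteq M[H]$ is a forcing extension, the downward absoluteness observation recorded before Lemma~\ref{lem:local-witnesses-for-graphs} gives $M\models\varphi_m(x,r,p)$. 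By the fixed-point dichotomy, Lemma~\ref{lem:dichotomy}, alternative (ii) holds in $M$. So there is $\forceR_r\in\Gamma^M_\infty$ with $\forceR_r\Vdash_M\neg\varphi_m(\check x,\check r,\check p)$.

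The key step is to transport $\forceR_r$ to a continuation over $M[H]$. Form the product $\mathbb T\times\forceR_r$, using fresh, disjoint reservoir and coding coordinates. By Lemma~\ref{lem:closure}(ii) this product lies in $\Gamma^M_\infty$. Because $\mathbb T$ sits on a regular support of the product, the quotient of $\mathbb T\times\forceR_r$ by $\mathbb T$, evaluated in $M[H]$, is $\forceR_r$ itself, computed with the same fresh coordinates. The closure of $\Gamma_\infty$ under quotients over regular supports, in Lemma~\ref{lem:closure}, then places this quotient in $\Gamma^{M[H]}_\infty$.

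Now take $K$ that is $\forceR_r$-generic over $M[H]$. Then $H\times K$ is generic for the product, so $K$ is $\forceR_r$-generic over $M$, and $M[K]\models\neg\varphi_m(x,r,p)$. The statement $\neg\varphi_m$ is $\Sigma^1_3$, so it is upward absolute from $M[K]$ to $M[H][K]$. Hence the continuation $\forceR_r\in\Gamma^{M[H]}_\infty$ forces $\neg\varphi_m(\check x,\check r,\check p)$ over $M[H]$. This contradicts the assumption that $r$ is an $\infty$-preservation witness in $M[H]$, and the lemma follows.

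The main obstacle is the middle step: verifying that $\forceR_r$, viewed over $M[H]$, really belongs to $\Gamma^{M[H]}_\infty$. The successor rule of Definition~\ref{def:gamma_succ} computes the sets $Y^\delta_N$ in local models, and these change when $H$ is adjoined. I would handle this through the quotient clause of Lemma~\ref{lem:closure}. Its proof asserts that quotient presentations make the same local preservation/guessing decisions, because the regular local models attached to the quotient's supports are the same models $N$ as in the product presentation. If that identification needs more care, the fallback is to check directly, by induction on $\delta$, that the product presentation's stages coming from $\forceR_r$ still satisfy the successor rule when read over $M[H]$.
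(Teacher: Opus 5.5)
Your proposal is correct and follows the paper's proof essentially step for step. Your downward $\Pi^1_3$ step is just the contrapositive of the paper's first case. The core move is exactly the paper's argument: you place a fresh copy of the destroying forcing $\forceR_r$ from Lemma~\ref{lem:dichotomy} beside $\mathbb T$, use Lemma~\ref{lem:closure} to regard it as a $\Gamma^{M[H]}_\infty$ continuation, and push $\neg\varphi_m(x,r,p)$ from $M[K]$ up to $M[H][K]$ by $\Sigma^1_3$ upward absoluteness. The point you flag as the main obstacle, namely that the tests $Y^\delta_N$ get recomputed in models containing $H$, is handled in the paper by the same bare appeal to the quotient/product clause of Lemma~\ref{lem:closure}, with no further verification.
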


\begin{proof}
Fix $r\in M$. If $M\models\neg\varphi_m(x,r,p)$, then
$M[H]\models\neg\varphi_m(x,r,p)$ by the $\Sigma^1_3$-upward
absoluteness observation above, since the negation of $\varphi_m$ is
$\Sigma^1_3$. Thus $r$ is not a preservation witness
in $M[H]$.

Assume instead that $M\models\varphi_m(x,r,p)$. By
Lemma~\ref{lem:dichotomy}, choose $\forceR_r\in\Gamma^M_\infty$ forcing
$\neg\varphi_m(\check x,\check r,\check p)$ over $M$. By Lemma~\ref{lem:closure},
the product/concatenation of $\mathbb T$ with a fresh copy of $\forceR_r$ is
$\infty$-allowable over $M$, and after forcing with $\mathbb T$ the copied
$\forceR_r$ is an $\infty$-allowable continuation over $M[H]$. In the
$\forceR_r$-extension of $M$, the statement $\neg\varphi_m(x,r,p)$ is true; since
this statement is $\Sigma^1_3$, it remains true after adjoining the independent
$\mathbb T$-generic by the $\Sigma^1_3$-upward absoluteness observation
above. Therefore the copied continuation over $M[H]$ destroys $\varphi_m(x,r,p)$, and $r$ is not protected in
$M[H]$.
\end{proof}

\begin{lemma}\label{lem:clearing-old-candidates}
Let $M$ be a local model satisfying $\CH$, and suppose that
$Y^\infty_{M,x,p,m}=\emptyset$ in $M$. There is an $\omega_1$-length construction
over $M$, all of whose countable initial segments are $\infty$-allowable, which
forces
\[
   \forall z\in M\cap\Real\;\neg\varphi_m(x,z,p).
\]
\end{lemma}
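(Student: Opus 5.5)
We will build the required construction by a bookkeeping argument of length $\omega_1$ over $M$, killing the reals of $M$ one at a time. Since $M\models\CH$, we fix an enumeration $\langle z_\xi\mid\xi<\omega_1\rangle$ of $M\cap\Real$ in $M$. We define by recursion on $\xi<\omega_1$ a construction $\langle\forceP_\xi\mid\xi\le\omega_1\rangle$ over $M$. At stage $\xi+1$ we first check whether $M\models\varphi_m(x,z_\xi,p)$.
\begin{itemize}
\item If it fails, we append a trivial padding stage.
\item If it holds, Lemma~\ref{lem:dichotomy} applied in $M$ (alternative (i) fails since $Y^\infty_{M,x,p,m}=\emptyset$) yields, via the fixed well-order, a forcing $\forceR_{z_\xi}\in\Gamma^M_\infty$ with $\forceR_{z_\xi}\Vdash_M\neg\varphi_m(\check x,\check z_\xi,\check p)$. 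We append a copy of $\forceR_{z_\xi}$ on fresh reservoir and coding coordinates.
\end{itemize}
Limits are taken with the mixed support convention. Note that each appended factor is a forcing over $M$ itself, not over $M[G_\xi]$. So the whole construction is really a mixed-support product over $M$ of the $\forceR_{z_\xi}$ on disjoint coordinates, presented as an iteration.

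\textbf{Allowability of initial segments.} Every countable initial segment $\forceP_\xi$ is a countable mixed product of members of $\Gamma^M_\infty$ with disjoint fresh coordinates. Hence it lies in $\Gamma^M_\infty$ by clause (ii) of Lemma~\ref{lem:closure} (applied at the stable level). Alternatively, one can view each new factor as an element of $\Gamma^{M[G_\xi]}_\infty$ and use clause (iii): the argument of Lemma~\ref{lem:old-reals-not-protected} shows that a fresh copy of a forcing in $\Gamma^M_\infty$ remains a legitimate continuation after an independent $\infty$-allowable forcing. Either way we get the first claim of the lemma. By Lemma~\ref{lem:basic-preservation}(ii) every initial segment preserves $\omega_1$, so the construction genuinely has length $\omega_1$ in the extension.

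\textbf{Killing every old real.} Fix $z=z_\xi\in M$ with $M\models\varphi_m(x,z,p)$; reals where $\varphi_m$ already fails in $M$ are handled by $\Sigma^1_3$ upward absoluteness. Let $G$ be generic for $\forceP_{\omega_1}$ and let $H_\xi$ be the restriction of $G$ to the coordinates of the copy of $\forceR_z$. Then $H_\xi$ is $\forceR_z$-generic over $M$, because the product factorizes with $\forceR_z$ as a regular factor. Hence $M[H_\xi]\models\neg\varphi_m(x,z,p)$. This is a $\Sigma^1_3$ statement with a real witness $u\in M[H_\xi]$. Every intermediate model $M[G_\beta]$ with $\beta>\xi$ is a forcing extension of $M[H_\xi]$ (by the quotient, i.e. the complementary product factor), so Shoenfield absoluteness keeps $u$ a witness there. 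Finally, as in Lemma~\ref{lem:final-persistence}, any real in $M[G]$ lies in some countable stage. Since $\neg\varphi_m(x,z,p)$ is $\Sigma^1_3$ and true in an inner model with the same ordinals of which $M[G]$ is a generic extension, it holds in $M[G]$. Upward absoluteness along the full $\omega_1$-length extension suffices here, because $M[G]$ is a forcing extension of $M[H_\xi]$ via the quotient.

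\textbf{Main obstacle.} The delicate step is justifying that $H_\xi$ is $M$-generic for $\forceR_z$ and that $M[G]$ is a forcing extension of $M[H_\xi]$. This requires that the appended factors really are independent product coordinates rather than names depending on earlier generics. I would handle it by choosing all $\forceR_{z_\xi}$ in $M$ in advance (possible since the enumeration and the well-order live in $M$), and then invoking the product/rearrangement of Remark~\ref{rem:mixed-harmless} and Lemma~\ref{lem:closure}(ii) to see the construction as a mixed product in which each factor is a regular subforcing.
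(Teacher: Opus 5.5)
Your proposal is correct and follows the paper's proof. Like the paper, you enumerate $M\cap\Real$ in order type $\omega_1$ using $\CH$, kill each $z_\xi$ satisfying $\varphi_m$ with the pointwise destruction forcing of Lemma~\ref{lem:dichotomy}, obtain $\infty$-allowability of the countable initial segments from Lemma~\ref{lem:closure}, and make each negation persist by $\Sigma^1_3$ upward absoluteness. Your only addition is presenting the construction explicitly as a mixed-support product over $M$, so that each destruction factor is $M$-generic and the final model is a forcing extension of that factor's extension; this spells out the independence which the paper's ``concatenation'' wording leaves implicit, and it is the same mechanism the paper uses in Lemma~\ref{lem:old-reals-not-protected}.
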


\begin{proof}
Enumerate $M\cap\Real$ as $\langle z_\xi:\xi<\omega_1\rangle$. At stage $\xi$,
if $\neg\varphi_m(x,z_\xi,p)$ already holds, do nothing. Otherwise use the
pointwise destruction forcing for $z_\xi$ given by Lemma~\ref{lem:dichotomy}.
Every countable initial segment is a countable concatenation of
$\infty$-allowable forcings, hence is $\infty$-allowable by
Lemma~\ref{lem:closure}. Once $\neg\varphi_m(x,z_\xi,p)$ has been made true, it
persists through the remaining forcing by the $\Sigma^1_3$-upward
absoluteness observation above.
\end{proof}

We shall also use the standard product-name separation principle for mutually
generic copies. Let $M$ be transitive, let $\mathbb T\in M$ be a forcing, and
let $\dot r\in M$ be a $\mathbb T$-name for a real. If $\mathbb T_0$ and
$\mathbb T_1$ are coordinate-disjoint copies of $\mathbb T$, with copied names
$\dot r_0$ and $\dot r_1$, then any product condition forcing
$\dot r_0=\dot r_1$ already forces both interpretations to be the same real of
$M$. Consequently, if $H_0\subseteq\mathbb T_0$ is $M$-generic and
$\dot r_0^{H_0}\notin M$, then over $M[H_0]$ the fresh copy $\mathbb T_1$ densely
forces $\dot r_1\neq\dot r_0^{H_0}$. This is the familiar separation principle
for names in products of mutually generic copies, used throughout Solovay-style
forcing arguments; cf.~\cite{Solovay1970}.

\begin{lemma}\label{lem:copying-tails}
Let $M$ be a local model and let $\mathbb T\in\Gamma^M_\infty$ be a regular tail
of one of the presentations used in the final construction. Let $\mathbb T^*$ be
a coordinate-disjoint copy of $\mathbb T$, with the copied bookkeeping function,
regular supports, names, and records. Then the product
$\mathbb T\times\mathbb T^*$ is $\infty$-allowable over $M$, and the coordinate
shift sends each preservation or guessing decision in the first tail to the
corresponding decision in the copied tail. In particular, if a stage of
$\mathbb T$ records a preservation witness named by $\dot y$ for $(m,p,x)$, then
the copied stage of $\mathbb T^*$ records the copied witness $\dot y^*$ for the
same $(m,p,x)$.
\end{lemma}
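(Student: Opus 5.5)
The plan is to split the lemma into two claims: first that $\mathbb T\times\mathbb T^*$ is $\infty$-allowable over $M$, and second that the coordinate shift carries the local decisions of $\mathbb T$ onto those of $\mathbb T^*$. I would prove the second claim first, because the first then follows from the closure properties once the shift is known to behave well. Fix a presentation $\mathcal E$ of $\mathbb T$ witnessing $\mathbb T\in\Gamma^M_\infty$, and let $\pi$ be a shift map sending stages, reservoir coordinates and Suslin-tree coding blocks of $\mathcal E$ to fresh coordinates disjoint from those of $\mathcal E$. Set $\mathcal E^*=\pi(\mathcal E)$, with bookkeeping $F^*=\pi\circ F$, supports $\pi``A$, names $\pi(\dot r)$, and records $\pi``I$, $\pi``J$ and $\pi``D$. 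Since the coding block is fixed in $L$ and the bookkeeping is constructible, $\pi$ induces an isomorphism between $\mathbb T$ and $\mathbb T^*$ that lies in $M$.

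For the transfer of decisions, I would show by induction on the stages of $\mathcal E$ that for each regular support $A$ the local models $M[G_A]$ and $M[G^*_{\pi``A}]$ correspond under $\pi$. Here $G^*=\pi``G$. This correspondence sends the evaluated reals $p,x,z$ and the integer $m$ to the same reals and integer, and sends $\dot y$ to $\dot y^*$. The derivative classes $\Gamma^N_\infty$ are defined by the same recursion in every local model (Lemma~\ref{lem:definability} and Lemma~\ref{lem:stabilization}), and the construction is invariant under renaming fresh coordinates. Together these give $Y^\infty_{N^*}(\pi(a))=\pi``Y^\infty_N(a)$, which is exactly the shift identity already used in part (ii) of the proof of Lemma~\ref{lem:closure}. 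So at each stage the preservation-or-guessing test has the same outcome in both presentations. The chosen representative also corresponds, because the fixed well-order of names is applied to shifted names. This yields the ``in particular'' clause: a preservation entry for $\dot y$ in $\mathbb T$ is matched by one for $\dot y^*$ in $\mathbb T^*$.

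For $\infty$-allowability, note that $\mathbb T^*\in\Gamma^M_\infty$ by the shift invariance just proved, and that both factors use pairwise disjoint reservoirs and coding blocks. Lemma~\ref{lem:closure}(ii), the mixed-product clause with a two-factor sequence padded by trivial forcings, then places $\mathbb T\times\mathbb T^*$ in $\Gamma^M_\alpha$ for every $\alpha$, and therefore in $\Gamma^M_\infty$. The product presentation has records $I\cup\pi``I$ and $J\cup\pi``J$, and its $D$ records the new certificates produced by matching preservation entries.

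The main obstacle is that the product presentation checks each shifted stage over its own regular local model. When $A$ is a regular support of $\mathcal E$, the set $\pi``A$ must remain a regular support inside the product, and the local model it generates must not change just because the other factor is present. I would first try to settle this with the disjointness of coordinates. The product of regular subforcings is regular, and the local model $M[G^*_{\pi``A}]$ is computed from $G^*\cap\mathbb T^*_{\pi``A}$ alone. Hence the decision at a copied stage depends only on $\pi``A$ and not on the first factor. If regularity inside the mixed product needs more care because of the countable-support reservoirs, I would instead use the front-loaded rearrangement of Remark~\ref{rem:mixed-harmless}.
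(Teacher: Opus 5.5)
Your proposal is correct and is essentially the paper's argument: the paper likewise gets $\mathbb T\times\mathbb T^*\in\Gamma^M_\infty$ from Lemma~\ref{lem:closure} and transfers each preservation-or-guessing decision by an induction along the presentation, using that $Y^\infty$ is defined uniformly in the local model and that the shift identifies supports, names and local models. The differences are only in exposition---you run the shift induction first, so that $\mathbb T^*\in\Gamma^M_\infty$ is explicit before you invoke product closure, and you spell out why copied supports remain regular in the product, which the paper simply asserts (one harmless slip: the bookkeeping function lies in $W$, not $L$, but the isomorphism induced by $\pi$ still lies in $M$, which is all you need).
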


\begin{proof}
The product is $\infty$-allowable by Lemma~\ref{lem:closure}. The rest is an
induction along the presentation of the tail. At each stage, the coordinate
shift identifies the local support, the evaluated names, and the local model in
the copied tail with their originals. Since the definition of the preservation
sets $Y^\infty$ is uniform in the local model and the copied tail uses fresh
coding coordinates, the preservation-or-guessing decision is invariant under the
shift. The records $I,J,D$ are transported by the same shift map.
\end{proof}

\begin{lemma}\label{lem:squaring}
Suppose that, at some earlier stage, the final construction records a guessing
entry for $(m,p,x)$ over a local model $M$ with
$Y^\infty_{M,x,p,m}=\emptyset$. Suppose further that, after an
$\infty$-allowable regular tail from $M$, a later preservation action for the
same $(m,p,x)$ chooses a witness $y$. If the earlier guessed value is distinct
from $y$, then there is an $\infty$-allowable continuation over the later stage
which forces that $A_{m,p}$ is not the graph of a function.
\end{lemma}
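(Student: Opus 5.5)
The plan is to run the squaring argument indicated in the introduction. Fix the earlier local model $M$ with $Y^\infty_{M,x,p,m}=\emptyset$, and let $\mathbb T\in\Gamma^M_\infty$ be the regular tail leading from $M$ to the later local model $N=M[H]$. At that later stage the preservation action picks a witness named by some $\mathbb T$-name $\dot y$, with $y=\dot y^H$. The first step is to show that $y\notin M$. If $y\in M$, then Lemma~\ref{lem:old-reals-not-protected}, applied to $\mathbb T$ and $H$, says that $y$ is not an $\infty$-preservation witness for $(m,p,x)$ in $M[H]$. This contradicts $y\in Y^\infty_{N,x,p,m}$. So the later witness is new over $M$. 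The hypothesis that the guessed value differs from $y$ is used only to put us in this case and not the excluded case $w=y$.

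Next take a coordinate-disjoint copy $\mathbb T^*$ of $\mathbb T$ with copied name $\dot y^*$. By Lemma~\ref{lem:copying-tails}, $\mathbb T\times\mathbb T^*\in\Gamma^M_\infty$, and the copied stage of $\mathbb T^*$ records the preservation witness $\dot y^*$ for the same triple $(m,p,x)$. The squaring continuation over the later stage is then the quotient $(\mathbb T\times\mathbb T^*)/\mathbb T$, which is essentially $\mathbb T^*$ computed over $M[H]$. This quotient is $\infty$-allowable over $M[H]$ by the quotient and product clauses of Lemma~\ref{lem:closure}. Let $H^*$ be generic for it.

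Since $\dot y^H\notin M$, the product-name separation principle shows that $\mathbb T^*$ densely forces $\dot y^*\neq\dot y^H$ over $M[H]$. By genericity, $y^*:=\dot y^{*H^*}\neq y$. Both $y$ and $y^*$ are now recorded preservation witnesses for $(m,p,x)$: $y$ in the original tail and $y^*$ in the copy, over the local model $M[H^*]$, which is isomorphic to $M[H]$. So the certificate $\{y,y^*\}$ enters $D$. By Lemma~\ref{lem:local-witnesses-for-graphs}(1), each witness satisfies $\varphi_m(x,\cdot,p)$ in every later $\infty$-allowable extension. By clause (2) of that lemma, or by Lemma~\ref{lem:final-persistence} in the $\omega_1$-length setting, $A_{m,p}(x)$ then contains two distinct reals, so $A_{m,p}$ is not the graph of a function.

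The main obstacle is the legitimacy of the copied witness. We must check that $y^*$ really is an $\infty$-preservation witness over $M[H^*]$, and that its preservation survives the further forcing $H$, which is generic relative to the copy. I would handle this by symmetry. Since $\mathbb T\times\mathbb T^*$ is a product, $H^*$ is $M$-generic for $\mathbb T$ via the shift. The tests $Y^\infty$ are uniform in the local model, so $y^*\in Y^\infty_{M[H^*],x,p,m}$. Forcing with $H$ over $M[H^*]$ is a copy of $\mathbb T\in\Gamma^M_\infty$, and this is $\infty$-allowable over $M[H^*]$ by Lemma~\ref{lem:closure}(iii). Hence $\varphi_m(x,y^*,p)$ holds in $M[H][H^*]$ and in all further allowable continuations.
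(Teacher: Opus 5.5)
Your proposal is correct within the paper's framework and follows essentially the same route as the paper's proof: newness of $y$ over $M$ via Lemma~\ref{lem:old-reals-not-protected}, then a coordinate-disjoint copy of the tail, made an $\infty$-allowable continuation over $M[H]$ by Lemma~\ref{lem:copying-tails} and the closure lemma, then product-name separation for $y^*\neq y$, and finally Lemma~\ref{lem:local-witnesses-for-graphs} with Lemma~\ref{lem:final-persistence} for the two witnesses. Your extra symmetry paragraph makes explicit a point the paper leaves to Lemma~\ref{lem:copying-tails}, but $M[H^*]$ is not literally isomorphic to $M[H]$, and an arbitrary copy-generic need not make the same preservation decision; to close this, take the copy below the shifted image $q^*$ of a condition $q\in H$ forcing that the later stage is a preservation action with witness $\dot y$, which is legitimate by the cone clause of Lemma~\ref{lem:closure}.
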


\begin{proof}
Let $\mathbb T$ be the quotient, over the earlier local model $M$, of the regular
tail leading from the no-witness stage to the later preservation action. Thus
the later local model has the form $M[H_0]$ for an $M$-generic
$H_0\subseteq\mathbb T$, after the usual identification of regular supports. Let
$\dot y$ be the $\mathbb T$-name whose interpretation by $H_0$ is the chosen
preservation witness $y_0=y$.

By Lemma~\ref{lem:old-reals-not-protected}, no real of $M$ can become an
$\infty$-preservation witness for $(m,p,x)$ after the tail. Since $y_0$ is such a
witness in $M[H_0]$, we have $y_0\notin M$.

Work over the later stage, which contains $H_0$ and $y_0$. Take a fresh copy
$\mathbb T_1$ of the same tail, using disjoint coding coordinates and an
independent Cohen reservoir. By Lemma~\ref{lem:copying-tails}, the product of
the original tail and the copied tail is $\infty$-allowable, and over
$M[H_0]$ the copied tail is an $\infty$-allowable continuation. Let $\dot y_1$ be
the copied name. By the product-name separation principle stated above, the fresh copy
forces
\[
   \dot y_1\neq \check y_0.
\]

The copied tail replays the bookkeeping from the earlier no-witness stage to the
later preservation stage. Therefore it records $y_1=\dot y_1$ as a preservation
witness for the same triple $(m,p,x)$. The original branch has already recorded
$y_0$. By Lemma~\ref{lem:local-witnesses-for-graphs}, both
\[
   \varphi_m(x,y_0,p)
   \qquad\text{and}\qquad
   \varphi_m(x,y_1,p)
\]
hold after the product continuation, and the continuation forces
$y_0\neq y_1$. The corresponding two-witness certificate is placed in $D$. By
Lemma~\ref{lem:final-persistence}, this non-functionality persists through the
rest of the final $\omega_1$-length construction.
\end{proof}

\begin{remark}[The role of guessing codes]\label{rem:guessing-codes}
The previous lemma is the only point at which guessing codes are used. The
earlier code $\Phi(p,x,w,m)$ is not used as a second member of the section
$A_{m,p}(x)$. Its role is to mark a local model $M$ in which the preservation set
for $(m,p,x)$ was empty. Therefore old reals from $M$ cannot later become
protected. If a preservation witness appears later, it is genuinely new over
$M$, and two independent copies of the tail produce two distinct protected
witnesses.
\end{remark}

Let $G_{\omega_1}$ be generic for the final $\omega_1$-length construction, and
write
\[
   W^*=W[G_{\omega_1}].
\]
Every real of $W^*$ is represented by a name with countable support; after
closing this support under the finitely many dependencies relevant to the name,
we may regard it as belonging to some countable regular support. By exhaustive
bookkeeping, every such local datum is considered cofinally often.

\begin{lemma}\label{lem:no-junk}
In $W^*$, suppose that $A_{m,p}$ is the graph of a total function. Then for all
reals $x,y$,
\[
   W^*\models\Phi(p,x,y,m)
   \quad\Longleftrightarrow\quad
   W^*\models\varphi_m(x,y,p).
\]
Consequently $A_{m,p}$ is $\Sigma^1_3(p)$, defined by
\[
   (x,y)\in A_{m,p}\iff \Phi(p,x,y,m).
\]
\end{lemma}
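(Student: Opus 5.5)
We prove the two directions separately and then read off the definability clause. Fix $m,p$ such that $A_{m,p}$ is the graph of a total function $f$ in $W^*$, and note that no certificate in $D$ can concern a triple $(m,p,x)$: by Lemma~\ref{lem:final-persistence}(2) such a certificate would make $A_{m,p}$ non-functional in $W^*$.

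\emph{($\Leftarrow$).} Assume $W^*\models\varphi_m(x,y,p)$, so that $y=f(x)$.
\begin{enumerate}[label=\textup{(\alph*)}]
\item Choose a countable stage $\alpha$ such that $p,x,y\in W[G_\alpha]$. By exhaustive bookkeeping, some later stage $\beta$ presents a regular support $A\supseteq\alpha$ together with the names for $p,x,m$ and the guess $z=y$.
\item Suppose first that $Y^\infty_{M,x,p,m}=\emptyset$ at this stage, where $M=W[G_A]$. Then either $M\models\neg\varphi_m(x,y,p)$, or action (B) appends $\forceR_y$ forcing $\neg\varphi_m(x,y,p)$. In both cases the $\Sigma^1_3$ statement $\neg\varphi_m(x,y,p)$ holds at a countable stage. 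By upward absoluteness it then holds in $W^*$, which is a contradiction.
\item Hence $Y^\infty_{M,x,p,m}\neq\emptyset$, and action (A) codes $(p,x,y',m)$ for the chosen representative $y'$. Lemma~\ref{lem:final-persistence}(1) gives $\varphi_m(x,y',p)$ in $W^*$. Functionality then forces $y'=y$, so $\Code(p,x,y,m)$ was performed.
\item Lemma~\ref{lem:basic-preservation}(v),(vii) now yields $W^*\models\Phi(p,x,y,m)$.
\end{enumerate}

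\emph{($\Rightarrow$).} Assume $W^*\models\Phi(p,x,y,m)$. By Lemma~\ref{lem:basic-preservation}(viii) and (vi), there is a least stage at which $\Code(p,x,y,m)$ was explicitly introduced, since $W$ contains no precoded tuples. That stage is either a preservation action or a guessing action.
\begin{enumerate}[label=\textup{(\alph*)}]
\item \textbf{Preservation action.} An entry of $I$ records $y$, and Lemma~\ref{lem:final-persistence}(1) gives $\varphi_m(x,y,p)$ in $W^*$.
\item \textbf{Guessing action.} This is the main obstacle: guessing codes must contribute no junk. At the stage, $Y^\infty_{M,x,p,m}=\emptyset$ over the local model $M$, and we must show $W^*\models\neg\varphi_m(x,y,p)$. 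Action (B) immediately appends either nothing, in which case $M\models\neg\varphi_m(x,y,p)$ already, or the destruction forcing $\forceR_y$. In both cases $\neg\varphi_m(x,y,p)$ becomes true at a countable stage and persists to $W^*$ by upward absoluteness. So $\varphi_m(x,y,p)$ fails, and we need $\Phi(p,x,y,m)$ to fail as well.
\end{enumerate}

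So case (b) can only be closed by showing that this case never happens at all. We argue by contradiction, using the value $f(x)$.
\begin{enumerate}[label=\textup{(\roman*)}]
\item By the ($\Leftarrow$) argument, $f(x)$ is eventually recorded at some later preservation action. That record is at a stage after the guessing entry, because we may choose the bookkeeping occurrence cofinally late.
\item The guessed value $y$ differs from $f(x)$, since $\neg\varphi_m(x,y,p)$ holds in $W^*$.
\item Action (A) then appends the squaring continuation of Lemma~\ref{lem:squaring}. That continuation places a two-witness certificate in $D$, and Lemma~\ref{lem:final-persistence}(2) shows that $A_{m,p}$ is not a function in $W^*$. This contradicts the hypothesis.
\end{enumerate}
Hence the assumption of totality and functionality rules out case (b). The delicate point, which I expect to require care, is exactly this bookkeeping: the later preservation stage must be over a regular tail from $M$, so that Lemma~\ref{lem:squaring} applies. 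I would try to secure this by using a support $A'\supseteq A\cup\alpha$ at the later occurrence.

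Finally, with the equivalence in hand, the displayed formula defines $A_{m,p}$. Since $\Phi$ is uniformly $\mathbf{\Sigma}^1_3$ with parameters from $p$ and the recursive code, $A_{m,p}$ is $\Sigma^1_3(p)$.
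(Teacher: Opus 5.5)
Your proposal is correct and is essentially the paper's proof. The $(\Leftarrow)$ direction is the same: exhaustive bookkeeping presents $f(x)$ as the guess, the no-witness case would destroy $f(x)$ persistently, and so the preservation representative is coded and equals $f(x)$ by functionality. The $(\Rightarrow)$ direction is also the same: you take the first introduction stage and then exclude guessing codes, with $w=f(x)$ ruled out by the destruction and $w\neq f(x)$ ruled out by Lemma~\ref{lem:squaring}. The only differences are organizational. The paper first rules out every guessing entry for $(m,p)$ before proving either direction. It meets the regular-tail hypothesis of Lemma~\ref{lem:squaring} exactly as you suggest, by taking the later support $B\supseteq A$ to contain names for $p,x,f(x)$.
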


\begin{proof}
Assume that $A_{m,p}$ is the graph of a total function in $W^*$, and denote this
function by $f$.

We first prove that no guessing entry for this fixed pair $(m,p)$ can occur on a
section which is total in the final model. Suppose, toward a contradiction, that
a guessing entry $(A,p,x,w,m,\infty)$ was recorded over an earlier local model
$M=W[G_A]$ with $Y^\infty_{M,x,p,m}=\emptyset$. Let $y=f(x)$.

There are two cases. If $w=y$, then the guessing-destruction action at the
earlier stage either already saw $M\models\neg\varphi_m(x,w,p)$ or appended an
$\infty$-allowable forcing making this negation true. Since
$\neg\varphi_m$ is $\Sigma^1_3$, the $\Sigma^1_3$-upward absoluteness
observation above makes the
negation persistent to the final model. This contradicts
$w=y=f(x)$.

Assume next that $w\neq y$. Choose a countable regular support $B$ containing
$A$ and supporting names for $p,x,y$. Since
$W^*\models\varphi_m(x,y,p)$ and $\varphi_m$ is $\Pi^1_3$, the downward direction
of the $\Pi^1_3$-downward form of the absoluteness observation above gives
$W[G_B]\models\varphi_m(x,y,p)$. By exhaustive bookkeeping, a later stage
presents the local datum $(B,p,x,y,m)$.

At that later stage the no-witness case cannot apply. If it did, the
construction would append an $\infty$-allowable forcing which makes
$\neg\varphi_m(x,y,p)$ true, and this $\Sigma^1_3$ statement would persist to
$W^*$ by the $\Sigma^1_3$-upward absoluteness observation above,
contradicting $y=f(x)$. Hence the
preservation case applies. Let $y_0$ be the preservation witness chosen at that
stage. Lemma~\ref{lem:final-persistence} gives
$W^*\models\varphi_m(x,y_0,p)$. Since $A_{m,p}$ is the graph of the function $f$
and $W^*\models\varphi_m(x,y,p)$, we have $y_0=y$. The earlier guessing value
$w$ is therefore distinct from the later preservation witness $y_0$, and
Lemma~\ref{lem:squaring} supplies a continuation which forces two distinct
members of $A_{m,p}(x)$. By the construction and Lemma~\ref{lem:final-persistence},
this non-functionality persists to $W^*$, a contradiction. Thus no such guessing
entry exists.

Now suppose $W^*\models\Phi(p,x,w,m)$. By
Lemma~\ref{lem:final-initial-segments}, every countable initial segment of the
final construction is $\infty$-allowable and hence allowable. Thus the
first-stage clause of Lemma~\ref{lem:basic-preservation} applies, and the code
has a first stage at which it was explicitly introduced. It cannot have been
introduced by a guessing-destruction action, by the preceding paragraph.
Therefore it was introduced by a preservation action, and the corresponding entry
belongs to $I$. Lemma~\ref{lem:final-persistence} gives
\[
   W^*\models\varphi_m(x,w,p).
\]
This proves the forward implication.

Conversely, suppose $W^*\models\varphi_m(x,y,p)$. Since $A_{m,p}$ is a graph,
we have $y=f(x)$. Choose a countable regular support $B$ with
$p,x,y\in W[G_B]$. Again, downward $\Pi^1_3$ absoluteness gives
$W[G_B]\models\varphi_m(x,y,p)$. By exhaustive bookkeeping, some later stage
presents $(B,p,x,y,m)$.

The no-witness case cannot apply at that stage, for otherwise the construction
would destroy $\varphi_m(x,y,p)$ and the negation would persist to $W^*$. Hence
the preservation case applies. Let $y_0$ be the chosen preservation witness.
Lemma~\ref{lem:final-persistence} gives
\[
   W^*\models\varphi_m(x,y_0,p).
\]
Functionality of $A_{m,p}$ and the assumption
$W^*\models\varphi_m(x,y,p)$ imply $y_0=y$. Therefore the construction
introduces, or has already introduced, the code $\Code(p,x,y,m)$. By
Lemma~\ref{lem:final-initial-segments}, the relevant tail has allowable
countable initial segments, so the persistence and non-interference clauses of
Lemma~\ref{lem:basic-preservation} give $W^*\models\Phi(p,x,y,m)$.

The two implications prove the equivalence.
\end{proof}

\begin{theorem}\label{thm:model-total-graphs}
In $W^*$, every total $\Pi^1_3$-function has a $\Sigma^1_3$ graph.
\end{theorem}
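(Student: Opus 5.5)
The plan is to read the theorem as a direct corollary of Lemma~\ref{lem:no-junk}, after checking that the statement is boldface and that $\Phi$ has the right complexity. Fix in $W^*$ a total function $f\colon\Real\to\Real$ whose graph is $\mathbf{\Pi}^1_3$. First, I will use the recursive enumeration $\langle\varphi_m\mid m<\omega\rangle$ from Section~\ref{sec:base} to choose $m<\omega$ and a real parameter $p\in W^*$ with $\Graph(f)=A_{m,p}$. This works because every boldface $\mathbf{\Pi}^1_3$ subset of the plane is $\Pi^1_3(p)$ for some real $p$, and any finitely many parameters can be merged into one $p$ by a recursive pairing. Here $A_{m,p}$ is computed in $W^*$ itself, and the fact that it is the graph of a total function is a statement about $W^*$. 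So the hypothesis of Lemma~\ref{lem:no-junk} is exactly met for this pair $(m,p)$.

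Next, I apply Lemma~\ref{lem:no-junk}. It gives, for all reals $x,y$ in $W^*$,
\[
  (x,y)\in\Graph(f)\iff W^*\models\varphi_m(x,y,p)\iff W^*\models\Phi(p,x,y,m).
\]
Lemma~\ref{lem:coding} says $\Phi$ is a uniformly $\Sigma^1_3$ predicate. Substituting the fixed integer $m$ and the fixed real $p$ into the recursive code $(p,x,y,m)$ therefore yields a $\Sigma^1_3(p)$ definition of $\Graph(f)$, so $\Graph(f)\in\mathbf{\Sigma}^1_3$ in $W^*$. Since $f$ was an arbitrary total $\mathbf{\Pi}^1_3$-function, the theorem follows.

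For bookkeeping I will also note two further points. First, $W^*$ is a model of $\ZFC$ obtained by forcing over $W$, which is itself a forcing extension of $L$. Together with the relative consistency built into the forcing method, this yields Theorem~\ref{thm:main}. Second, Corollary~\ref{cor:unif-fails} then shows that $\mathbf{\Pi}^1_3$-uniformization fails in $W^*$.

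The only point I expect to need care is uniformity. The $\Sigma^1_3$ formula defining $\Phi$ must not depend on $(m,p)$ beyond plugging them in as parameters, and it must be evaluated in $W^*$ and not in some intermediate model. Lemma~\ref{lem:coding} states that $\Phi$ is uniform and absolute to the $\omega_1$-preserving extensions under consideration. Lemma~\ref{lem:basic-preservation}(ii) together with Lemma~\ref{lem:final-initial-segments} shows that the final construction preserves $\omega_1$, so the coding-tree sequence keeps its $\Sigma_1$ definition over $H(\omega_2)$ in $W^*$. I would cite exactly these facts, and I anticipate no further obstacle: all the substantive work was done in Lemma~\ref{lem:no-junk}.
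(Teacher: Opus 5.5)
Your proposal is correct and follows the paper's proof. You pick $m$ and $p$ with $\Graph(f)=A_{m,p}$, apply Lemma~\ref{lem:no-junk} to get $(x,y)\in\Graph(f)\iff\Phi(p,x,y,m)$ in $W^*$, and read off a $\Sigma^1_3(p)$ definition from the uniform complexity of $\Phi$. Your extra remarks on $\omega_1$-preservation and on evaluating $\Phi$ in $W^*$ are harmless but not needed, since Lemma~\ref{lem:no-junk} is already stated in $W^*$.
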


\begin{proof}
Let $f:\Real\to\Real$ be a total function in $W^*$ whose graph is $\Pi^1_3$.
Choose $m<\omega$ and a real parameter $p$ such that
\[
   \Graph(f)=A_{m,p}=\{(x,y):\varphi_m(x,y,p)\}.
\]
By Lemma~\ref{lem:no-junk}, for all reals $x,y$ in $W^*$,
\[
   (x,y)\in\Graph(f)
   \quad\Longleftrightarrow\quad
   \Phi(p,x,y,m).
\]
Since $\Phi$ is uniformly $\Sigma^1_3$ in the parameter $p$, the graph of $f$ is
$\Sigma^1_3(p)$.
\end{proof}

\section{Combining with \texorpdfstring{$\mathbf{\Pi}^1_3$}{Pi13}-reduction}\label{sec:reduction}
We finally explain how the preceding construction is combined with the forcing
construction for $\mathbf{\Pi}^1_3$-reduction from
\cite{HOFFELNER2023103292}. We use only the fixed point construction for
reduction and its closure properties. The erroneous assertion is Lemma 3.13 in the last part
of~\cite{HOFFELNER2023103292}, concerning graphs of partial
$\mathbf{\Pi}^1_3$ functions. This lemma in fact does not hold in $W[G_{\omega_1}]$, contrary to what the lemma claims\footnote{Indeed, the proof erroneously claims that under the lemmas assumptions  case 1 (i) applies in the iteration. A short look at the definition reveals that in fact case 1 (ii) must apply which implies that lemma 3.13 is not true as stated. The intuition to consider $\Pi^1_3$ graphs of functions and their $\Sigma^1_3$-definability was the right one though, as this section shows. The lemma 3.13 can never hold as it would collapse the projective hierarchy.}.   it is replaced by the
total-graph theorem proved in Sections~\ref{sec:derivative} and~\ref{sec:witness}.

Fix, in addition to the enumeration
$\langle\varphi_m(v_0,v_1,v_2)\mid m<\omega\rangle$ of binary
$\Pi^1_3$ formulas used above, a recursive enumeration
\[
   \langle\theta_m(u,v)\mid m<\omega\rangle
\]
of all lightface $\Pi^1_3$ formulas in two real variables, chosen so that
\[
   \theta_0(u,v)\equiv u=u.
\]
For a real parameter $p$, write
\[
   B_{m,p}=\{x\in\Real\mid \theta_m(x,p)\}.
\]
Thus every boldface $\mathbf{\Pi}^1_3$ set of reals is some $B_{m,p}$.

We refine the fixed definable independent sequence of coding trees into three
definable pairwise disjoint independent subsequences
\[
   \vec S^{\mathrm{gr}},\qquad
   \vec S^{\mathrm{red},1},\qquad
   \vec S^{\mathrm{red},2}.
\]
This is just a definable thinning of $\vec S$. The graph construction of the
previous sections is now read as using only $\vec S^{\mathrm{gr}}$; its decoding
predicate will be denoted by
\[
   \Phi^{\mathrm{gr}}(p,x,y,m).
\]
The reduction construction uses the two sequences
$\vec S^{\mathrm{red},1}$ and $\vec S^{\mathrm{red},2}$. We write
\[
   \Psi_i(p,x,m,k) \qquad (i=1,2)
\]
for the usual $\Sigma^1_3$ statement that the real coding $(p,x,m,k)$ has been
written into the $i$-th reduction sequence. All coordinate families, reservoir
coordinates and almost-disjoint coding blocks used for the three sequences are
chosen disjointly.

The part of~\cite{HOFFELNER2023103292} needed here can be isolated as follows.

\begin{proposition}[The reduction fixed point]\label{prop:reduction-blackbox}
The reduction construction of~\cite[Section~3]{HOFFELNER2023103292}, applied to
$\vec S^{\mathrm{red},1}$ and $\vec S^{\mathrm{red},2}$ and with real parameters
coded into the reals handled by the bookkeeping, has the following properties.
\begin{enumerate}[label=\textup{(\roman*)}]
\item It gives a derivative hierarchy of hybrid coding forcings which preserves
$\omega_1$, is closed under regular subforcings, quotients by regular
subforcings, cones, countable concatenations and products with disjoint
coordinates, and stabilizes at a nonempty fixed point.
\item The proof of reduction is insensitive to adding further disjoint coding
coordinates, provided that the preservation tests in the reduction rule quantify
over the full class of allowable future continuations under consideration.
\item In an exhaustive $\omega_1$-length construction at the fixed point, for
all $p$ and all $m,k$ with $m,k\neq0$, the sets
\[
   D^1_{m,k,p}=B_{m,p}\cap\{x\mid \neg\Psi_1(p,x,m,k)\}
\]
and
\[
   D^2_{m,k,p}=B_{k,p}\cap\{x\mid \neg\Psi_2(p,x,m,k)\}
\]
are $\mathbf{\Pi}^1_3(p)$ and reduce the pair $(B_{m,p},B_{k,p})$.
\end{enumerate}
\end{proposition}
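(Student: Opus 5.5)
The plan is to verify the three clauses by rerunning the reduction argument of \cite[Section~3]{HOFFELNER2023103292} inside the framework of Sections~\ref{sec:base} and~\ref{sec:derivative}, and to check at each point that the flawed Lemma~3.13 of that paper is never used. That lemma only enters the final extraction of a non-uniformizable relation, not the reduction theorem. First I would restate the reduction rule in the present notation. At a stage, the bookkeeping presents a countable regular support $A$ and nice names for $p,x$ and indices $m,k\neq0$, evaluated in $M=W[G_A]$. If $M\models\theta_m(x,p)\wedge\theta_k(x,p)$, the rule tests whether $\theta_m(x,p)$, respectively $\theta_k(x,p)$, is preserved by every forcing of the current derivative class over $M$. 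Depending on the outcome, it writes the code for $(p,x,m,k)$ into $\vec S^{\mathrm{red},2}$ (when $x$ is to be kept in $B_m$) or into $\vec S^{\mathrm{red},1}$ (otherwise). The parameter $p$ is simply carried inside the coded real, exactly as $(p,x,y,m)$ is carried in Lemma~\ref{lem:coding}.

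For clause (i), I would observe that the reduction derivative has the same formal shape as Definition~\ref{def:gamma_succ}: a successor rule that quantifies over the previous class computed in local models, and intersections at limits. The proofs of Lemmas~\ref{lem:definability}, \ref{lem:monotone}, \ref{lem:closure} and~\ref{lem:stabilization} then go through verbatim, with the shift-isomorphism argument for products and the quotient argument for regular subforcings. Preservation of $\omega_1$ is Lemma~\ref{lem:basic-preservation}(ii) via the rearrangement of Remark~\ref{rem:mixed-harmless}. The fixed point is nonempty because pure padding iterations satisfy every successor rule vacuously.

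For clause (ii), I would use locality of decoding, i.e.\ Lemma~\ref{lem:basic-preservation}(vi),(vii) applied to the thinned sequences. Forcing on $\vec S^{\mathrm{gr}}$ blocks, or on blocks of the other reduction sequence, cannot create, alter or destroy a $\Psi_i$ code. So the only way additional coordinates could interfere is through the truth of $\theta_m,\theta_k$. This is exactly what the preservation tests control, provided they range over the full combined class. I would therefore define a single joint derivative whose successor rule imposes both the graph rule of Definition~\ref{def:gamma_succ} and the reduction rule, each testing against the joint previous class. The closure lemmas are proved simultaneously for the joint class. Every argument in the reduction proof that quantifies over ``all allowable continuations'' is then read as quantifying over this joint class.

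For clause (iii), the $\mathbf{\Pi}^1_3(p)$ bound is immediate, since $\neg\Psi_i$ is $\Pi^1_3$ and $\Pi^1_3$ is closed under conjunction. Containment $D^1\subseteq B_{m,p}$ and $D^2\subseteq B_{k,p}$ is by definition. The real work is covering and disjointness for $x\in B_{m,p}\cap B_{k,p}$. By exhaustive bookkeeping, and by downward $\Pi^1_3$ absoluteness to a countable support containing $p,x$, such an $x$ is presented at some stage where both $\theta$'s hold locally. The reduction rule then writes exactly one of $\Psi_1,\Psi_2$, and persistence plus non-interference give exactly one of them in $W^*$. If the rule chose to keep $x$ on one side, the preservation test guarantees that the corresponding $\theta$ survives to $W^*$, by the argument of Lemma~\ref{lem:final-persistence}. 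Points in only one of the two sets are never coded, by Lemma~\ref{lem:basic-preservation}(viii), so they stay in their $D^i$.

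I expect the main obstacle to be this last step. One must show that when the preservation test fails for both sides, the choice made does not leave $x$ covered by neither set. This is where the original fixed-point argument, in which the failing side is actively destroyed by a continuation from the fixed-point class, must be reproduced over the joint class. It is also where one must confirm that the interaction with the squaring continuations of Lemma~\ref{lem:squaring}, which are themselves in the joint class, causes no new case.
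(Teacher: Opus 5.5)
Your overall route matches the paper's. Its proof of this proposition simply invokes the fixed-point reduction argument of \cite{HOFFELNER2023103292} with $p$ folded into the coded real. It obtains (ii) as you do: disjoint coordinates cannot affect any $\Psi_i$, and the protection tests range over all allowed tails. Your joint derivative is the paper's Definition~\ref{def:combined-derivative}.

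The paper does not re-verify (iii), but you do, and your argument breaks at a specific step. You claim that points lying in only one of $B_{m,p},B_{k,p}$ in $W^*$ are never coded, citing Lemma~\ref{lem:basic-preservation}(viii). That clause only says that a final instance of $\Psi_i$ has an explicit first introduction stage. At that stage $\theta_m(x,p)\wedge\theta_k(x,p)$ held locally, but both statements are $\Pi^1_3$ and may fail later. Under your rule, if neither test succeeds you write $\Psi_1(p,x,m,k)$. A later forcing in the class may then make $\neg\theta_k(x,p)$ true while $\theta_m(x,p)$ survives, and then $x\in B_{m,p}\setminus B_{k,p}$ lies in neither $D^1_{m,k,p}$ nor $D^2_{m,k,p}$.

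What you need is an invariant, not a ``never coded'' claim. $\Psi_1(p,x,m,k)$ may be written only if, at that stage, one of the following holds:
\begin{enumerate}
\item $\theta_k(x,p)$ is protected in the stable class, or
\item a continuation from the stable class forcing $\neg\theta_m(x,p)$ has just been appended (it is supplied by the dichotomy, as in Lemma~\ref{lem:dichotomy}).
\end{enumerate}
The same holds for $\Psi_2$ with $m,k$ exchanged. The destroyed side must be $\theta_m$, the side whose $D^1$-membership $\Psi_1$ cancels. Destroying $\theta_k$ and then writing $\Psi_1$ leaves $x$ uncovered whenever $\theta_m$ survives. Since $\neg\theta$ is $\Sigma^1_3$ and protected statements persist as in Lemma~\ref{lem:final-persistence}, every code a point receives is then on the harmless side. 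This settles both the one-set case and the both-unprotected case you left open.

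Second, ``the rule writes exactly one of $\Psi_1,\Psi_2$'' is true per stage, but it does not give ``exactly one in $W^*$''. Exhaustive bookkeeping presents $(p,x,m,k)$ cofinally often, over regular supports that need not be nested. Suppose $\theta_m(x,p)$ tests protected over one local model, so $\Psi_2$ is written. Suppose at another stage $\theta_m$ tests unprotected while $\theta_k$ tests protected, so $\Psi_1$ is written. Then both $\theta$'s hold in $W^*$, both codes are present, and $x\notin D^1_{m,k,p}\cup D^2_{m,k,p}$.

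You must either add a clause skipping $(p,x,m,k)$ once a reduction code for it exists, or prove that the outcome of the test for $\theta_m(x,p)$ does not depend on which local model presents $x$. For the latter, show that protection and non-protection both propagate from a local model to the current model. Protection propagates by concatenation closure; non-protection needs an argument like Lemma~\ref{lem:old-reals-not-protected}.

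With these two repairs, your last worry about squaring and destruction tails disappears. They lie in the combined stable class (Lemma~\ref{lem:combined-initial-segments}), so they preserve every protected $\theta$, and the invariant never relies on an unprotected one.
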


\begin{proof}
This is the fixed point proof of the $\mathbf{\Pi}^1_3$-reduction theorem from
\cite{HOFFELNER2023103292}. The additional parameter $p$ is simply included in
the real coded by the bookkeeping. Clause (ii) is exactly the role played by
allowability in that proof: once the definition of the protected side of a
reduction requirement quantifies over all allowed tails, later allowed forcing
cannot create the bad configuration which the coding decision was designed to
avoid. Disjoint additional coding coordinates do not change the decoding
statements $\Psi_i$.
\end{proof}

We now form one fixed point construction containing both kinds of requirements.
The important point is that the word ``allowable'' is interpreted globally: the
total-graph preservation derivative quantifies over future reduction actions as
well as future graph actions, and the reduction construction similarly tests
preservation against future graph actions.

\begin{definition}[Combined derivative]\label{def:combined-derivative}
Let $M$ be one of the local models obtained from a regular support. The classes
\[
   \Gamma^{M,\mathrm{cmb}}_\alpha
\]
of \emph{combined $\alpha$-allowable forcings over $M$} are defined by the same
local recursion as in Section~\ref{sec:derivative}, with the following two kinds
of nontrivial successor stages.

At a stage, the bookkeeping first presents a regular support and names supported
by it. After evaluating these names in the corresponding local model
$N=M[G_A]$, one of the following rules is applied.

\begin{enumerate}[label=\textup{(\alph*)}]
\item \textbf{Reduction requirement.}
The bookkeeping presents data $(p,x,m,k)$ with $m,k\neq0$. The stage follows the
successor rule of the reduction construction from
Proposition~\ref{prop:reduction-blackbox}, using the reduction coding predicates
$\Psi_1,\Psi_2$ and the reduction coordinates
$\vec S^{\mathrm{red},1},\vec S^{\mathrm{red},2}$. Whenever that rule refers to
$\delta$-allowable future forcings, it means forcings in
$\Gamma^{N,\mathrm{cmb}}_\delta$.

\item \textbf{Total-graph requirement.}
The bookkeeping presents data $(p,x,z,m)$. Define, in $N$,
\[
   Y^{\delta,\mathrm{cmb}}_{N,x,p,m}
   =
   \left\{y\in N\cap\Real\;\middle|\;
   N\models\varphi_m(x,y,p)
   \text{ and }
   \forall\forceR\in\Gamma^{N,\mathrm{cmb}}_\delta\;
      \forceR\Vdash_N\varphi_m(\check x,\check y,\check p)
   \right\}.
\]
If this set is nonempty, choose its representative by the fixed well-order and
write $(p,x,y,m)$ into the graph coordinates, i.e. force with
$\Code^{\mathrm{gr}}(p,x,y,m)$, and record the corresponding preservation
entry. If the set is empty, write the bookkeeping guess $(p,x,z,m)$ into the
graph coordinates and record a guessing entry. This is precisely the successor
rule from Definition~\ref{def:gamma_succ}, with
$\Gamma_\delta$ replaced by $\Gamma^{\mathrm{cmb}}_\delta$ and with the reserved
graph coding coordinates.
\end{enumerate}
At limit ordinals we take intersections of the preceding classes, locally over
each $M$.
\end{definition}

\begin{lemma}\label{lem:combined-closure}
The combined hierarchy is definable, monotone, nonempty, closed under regular
local tails, cones, countable concatenations and products with disjoint
coordinates, and locally stabilizes. We denote the stable class over $M$ by
\[
   \Gamma^{M,\mathrm{cmb}}_\infty.
\]
Moreover, an $\omega_1$-length combined construction whose countable initial
segments belong to the stable class preserves $\omega_1$ and satisfies the same
$\CH$ conclusion as in Lemma~\ref{lem:basic-preservation}.
\end{lemma}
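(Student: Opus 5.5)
The plan is to transfer, clause by clause, the arguments of Lemmas~\ref{lem:definability}, \ref{lem:monotone}, \ref{lem:closure} and \ref{lem:stabilization}, and then the cardinal argument of Lemma~\ref{lem:basic-preservation}, to the combined recursion of Definition~\ref{def:combined-derivative}. Throughout, the graph rules are verified exactly as before, and the reduction rules are verified by appeal to Proposition~\ref{prop:reduction-blackbox}. The one structural fact that makes this possible is that the two kinds of stages use disjoint tree sequences $\vec S^{\mathrm{gr}}$, $\vec S^{\mathrm{red},1}$, $\vec S^{\mathrm{red},2}$ and disjoint reservoir and coding coordinates. Consequently every combined presentation is still a hybrid allowable presentation in the sense of Definition~\ref{def:allowable0}, merely with a finer classification of its coding stages.

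\emph{Definability and monotonicity.} I argue by induction on $\alpha$, uniformly in the local model $M$. The successor clause quantifies over presentations, regular supports, nice names and records. At a graph stage it evaluates $Y^{\delta,\mathrm{cmb}}_{N,x,p,m}$ by quantifying over $\Gamma^{N,\mathrm{cmb}}_\delta$. At a reduction stage it evaluates the reduction test of Proposition~\ref{prop:reduction-blackbox}, whose quantifier over future continuations is, by Definition~\ref{def:combined-derivative}(a), again a quantifier over $\Gamma^{N,\mathrm{cmb}}_\delta$. Both quantifiers are definable by the induction hypothesis, and limits are intersections. Monotonicity is then immediate, as in Lemma~\ref{lem:monotone}.

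\emph{Closure.} I prove $\mathsf C_\alpha(M)$ by simultaneous induction, following the proof of Lemma~\ref{lem:closure}. For regular subiterations, quotients and cones, each retained stage recomputes its local test in the same local model $N$, so the test gives the same answer. For products and concatenations with disjoint shift maps $\pi_n$, the shift isomorphism transports both the graph test sets and the reduction test. This works because each test is defined uniformly from the local model and the class $\Gamma^{N,\mathrm{cmb}}_\delta$, and the class is itself invariant under coordinate shifts. Since the coding predicates $\Phi^{\mathrm{gr}}$ and $\Psi_i$ read only their own blocks, interleaving stages from different factors cannot change any decoded value; this is the non-interference clause (vi) of Lemma~\ref{lem:basic-preservation}, applied separately to each sequence.

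\emph{Nonemptiness.} The trivial forcing, and more generally pure padding iterations, satisfy every successor rule vacuously, so they belong to every $\Gamma^{M,\mathrm{cmb}}_\alpha$. Nonemptiness of the stable class follows.

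\emph{Stabilization and the $\omega_1$-length construction.} Stabilization follows by the cardinality argument of Lemma~\ref{lem:stabilization}: every candidate forcing is coded by a subset of $\omega_1^M$, so the class is a decreasing sequence of subsets of a fixed set and cannot shrink through all ordinals. For the $\omega_1$-length construction, each countable initial segment is a hybrid allowable forcing. By Remark~\ref{rem:mixed-harmless} it rearranges into a $\sigma$-closed countable-support product of Cohen reservoirs followed by a finite-support ccc iteration of almost-disjoint codings. Hence it preserves $\omega_1$, and the $\CH$ argument of Lemma~\ref{lem:basic-preservation}(iv) applies verbatim.

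\emph{Main obstacle.} The hard part is the closure step for reduction stages. In the reduction case we only know invariance from Proposition~\ref{prop:reduction-blackbox}(i),(ii), and that invariance was proved for its own hierarchy, not for the combined class. My first approach is to observe that the proof of (i) in the cited paper uses only two ingredients. The first is that the preservation test is defined uniformly from the local model and the current class of allowed tails. The second is that this class is closed under shifts and quotients. The combined induction hypothesis $\mathsf C_\delta$ supplies exactly these two ingredients for $\Gamma^{\mathrm{cmb}}_\delta$. So I would run that proof with $\Gamma^{\mathrm{cmb}}_\delta$ substituted throughout, using clause (ii) to absorb the extra graph coordinates.
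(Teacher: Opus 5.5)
Your proposal follows the paper's proof essentially step for step, at the same level of detail. Graph stages are handled by the closure arguments of Section~\ref{sec:derivative}, reduction stages by Proposition~\ref{prop:reduction-blackbox}, products and restrictions through the disjointness of the three coordinate families, stabilization by the $\aleph_1$-size bound, and $\omega_1$-preservation and $\CH$ by the hybrid rearrangement of Lemma~\ref{lem:basic-preservation}. The only difference is one of explicitness: where the paper simply cites Proposition~\ref{prop:reduction-blackbox} for reduction stages, you correctly note that its clause (i) concerns the reduction-only hierarchy, so it must be rerun with $\Gamma^{\mathrm{cmb}}_\delta$ as the class of allowed tails — which is what the paper's citation implicitly amounts to.
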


\begin{proof}
The proof is the simultaneous induction from Section~\ref{sec:derivative}, with
one extra disjoint family of requirements. The graph stages are handled by the
closure lemmas of Section~\ref{sec:derivative}; the reduction stages are handled
by Proposition~\ref{prop:reduction-blackbox}. The coding coordinates for the
three families are disjoint, so products and subforcings are obtained by taking
the corresponding products and restrictions in each family separately and then
interleaving the resulting presentations. Monotonicity and nonemptiness are
immediate from the two component constructions. Stabilization follows because,
up to the fixed coding conventions, all forcings involved have size at most
$\aleph_1$, so the decreasing hierarchy of sets of possible presentations must
eventually be constant. The preservation of $\omega_1$ and the $\CH$ conclusion
are the same hybrid forcing argument as in Lemma~\ref{lem:basic-preservation}.
\end{proof}

Let $\Gamma^{\mathrm{cmb}}_\infty$ be the stable class over the initial model
$W$. We now run an $\omega_1$-length construction over $W$ with exhaustive
bookkeeping for both kinds of requirements.

At reduction stages, with data $(p,x,m,k)$ and $m,k\neq0$, we perform the final
reduction action from~\cite{HOFFELNER2023103292}, interpreted with
$\Gamma^{\mathrm{cmb}}_\infty$ as the class of allowable continuations and using
only the reduction coordinates. At total-graph stages, with data $(p,x,z,m)$,
we perform the final action from Section~\ref{sec:witness}, again with
$\Gamma^{\mathrm{cmb}}_\infty$ in place of $\Gamma_\infty$ and using only the
graph coordinates. Thus a no-witness graph stage first introduces the guessing
code and then destroys the guessed value by a combined $\infty$-allowable tail
when necessary; if a later preservation witness appears, the squaring
continuation is taken inside the combined stable class.

Let $G^{\mathrm{cmb}}_{\omega_1}$ be generic for this final construction and put
\[
   W^{\mathrm{cmb}}=W[G^{\mathrm{cmb}}_{\omega_1}].
\]

\begin{lemma}\label{lem:combined-initial-segments}
Every countable initial segment of the final combined construction is
$\Gamma^{\mathrm{cmb}}_\infty$-allowable.
\end{lemma}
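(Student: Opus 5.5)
The proof will follow the pattern of Lemma~\ref{lem:final-initial-segments}: induction on the length $\beta<\omega_1$ of the initial segment, showing at each step that $\forceP_\beta\in\Gamma^{W,\mathrm{cmb}}_\infty$. The base case is trivial. Limit stages $\lambda<\omega_1$ are countable concatenations of segments already known to be in the stable class, so the closure clause of Lemma~\ref{lem:combined-closure} handles them. The whole argument therefore reduces to the successor step. There we assume $\forceP_\beta\in\Gamma^{\mathrm{cmb}}_\infty$ and show that the stage-$\beta$ forcing $\dot\forceR_\beta$ is forced to lie in $\Gamma^{W[\dot G_\beta],\mathrm{cmb}}_\infty$ (equivalently, in the stable class over the relevant local model $M=W[G_A]$, transported by the quotient/regular-tail closure). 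Concatenation closure then finishes the step.

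The successor step splits into three cases according to the stage type.

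\emph{Reduction stages.} The action is the final reduction action of \cite{HOFFELNER2023103292}, with $\Gamma^{\mathrm{cmb}}_\infty$ as the class of continuations. Since $\Gamma^{\mathrm{cmb}}_\infty=\Gamma^{\mathrm{cmb}}_{\infty+1}$ by stabilization, this action obeys clause (a) of Definition~\ref{def:combined-derivative} at the stable level, so it belongs to the stable class. Proposition~\ref{prop:reduction-blackbox}(ii) ensures that the presence of graph coordinates does not affect this.

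\emph{Preservation graph stages.} The coding $\Code^{\mathrm{gr}}(p,x,y,m)$ with $y$ the chosen representative of $Y^{\infty,\mathrm{cmb}}_{M,x,p,m}$ is exactly clause (b) at the fixed point, for the same reason.

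\emph{No-witness graph stages.} The guessing code is again clause (b). The appended destruction forcing $\forceR_z$ is chosen inside $\Gamma^{M,\mathrm{cmb}}_\infty$ by the combined analogue of Lemma~\ref{lem:dichotomy}. I will state that analogue explicitly, since its proof uses only cone closure, which Lemma~\ref{lem:combined-closure} provides. Concatenation closure then puts the whole stage in the stable class.

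\emph{Squaring continuations.} These need the most care. I would prove a combined version of Lemma~\ref{lem:copying-tails}: the copied tail $\mathbb T^*$ of a regular tail $\mathbb T\in\Gamma^{M,\mathrm{cmb}}_\infty$, built with disjoint reservoir and coding coordinates in all three families $\vec S^{\mathrm{gr}},\vec S^{\mathrm{red},1},\vec S^{\mathrm{red},2}$, gives a product $\mathbb T\times\mathbb T^*$ in the stable class by product closure. The squaring continuation over $M[H_0]$ is then the quotient of this product by the regular factor $\mathbb T$, so it is in the stable class by quotient closure.

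The main obstacle is this last point, together with the interaction between copying and reduction stages. The copied tail replays reduction actions as well as graph actions. I must check that under the coordinate shift each replayed reduction decision is the same decision the reduction rule would make over the shifted local model, so that the copy really is a legitimate presentation. I would argue this by the uniformity of the combined definition in the local model (Lemma~\ref{lem:definability} for the combined hierarchy) and by shift-invariance of both the reduction tests and the sets $Y^{\infty,\mathrm{cmb}}$, as in the product clause of Lemma~\ref{lem:closure}. If the reduction black box does not directly yield this invariance, the fallback is to invoke Proposition~\ref{prop:reduction-blackbox}(i) and its closure under products with disjoint coordinates. Those closure properties already require exactly this invariance.
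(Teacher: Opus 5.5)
Your proposal follows the paper's proof: induction on the length, with reduction stages justified by the reduction fixed point read with combined tails, graph stages by the stable combined successor rule as in Lemma~\ref{lem:final-initial-segments}, and destruction tails, squaring tails (quotients of a product of two copies) and limits by the closure properties of Lemma~\ref{lem:combined-closure}. The paper does not separately check that replayed reduction stages in the copied tail are shift-invariant. It takes this directly from the closure under products with disjoint coordinates in Lemma~\ref{lem:combined-closure}, which is exactly your fallback.
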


\begin{proof}
At a reduction stage this is part of the reduction fixed point construction from
Proposition~\ref{prop:reduction-blackbox}, now read with combined allowable
tails. At a graph stage it is the argument of Lemma~\ref{lem:final-initial-segments},
with every occurrence of $\Gamma_\infty$ replaced by
$\Gamma^{\mathrm{cmb}}_\infty$. The appended destruction tails and the squaring
tails are combined $\infty$-allowable by Lemma~\ref{lem:combined-closure}.
Countable concatenations are handled by the same lemma.
\end{proof}

\begin{lemma}\label{lem:combined-reduction}
In $W^{\mathrm{cmb}}$, the $\mathbf{\Pi}^1_3$-reduction property holds.
\end{lemma}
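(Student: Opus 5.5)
The plan is to derive the lemma directly from clause (iii) of Proposition~\ref{prop:reduction-blackbox}, after checking that the final combined construction meets that proposition's hypotheses. Fix a pair of boldface $\mathbf{\Pi}^1_3$ sets. Combining the parameters into one real, write them as $B_{m,p}$ and $B_{k,p}$ with $m,k\neq 0$. The proviso $m,k\neq0$ is harmless: $\theta_0$ defines the full set $\Real$, and any trivial formula can be re-indexed, so every $\mathbf{\Pi}^1_3$ set has some index other than $0$. The candidate reducing sets are $D^1_{m,k,p}$ and $D^2_{m,k,p}$.

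First I will show these sets are $\mathbf{\Pi}^1_3(p)$ in $W^{\mathrm{cmb}}$. Each $\Psi_i$ is $\Sigma^1_3$ and is decoded only from the reduction subsequence $\vec S^{\mathrm{red},i}$. Hence $\neg\Psi_i$ is $\Pi^1_3$, and the intersection with $B_{m,p}$ or $B_{k,p}$ stays $\Pi^1_3$.

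Next I will verify that $W^{\mathrm{cmb}}$ is an exhaustive $\omega_1$-length construction at the reduction fixed point, in the sense required by Proposition~\ref{prop:reduction-blackbox}(iii). Three facts supply this.
\begin{enumerate}[label=\textup{(\arabic*)}]
\item By Lemma~\ref{lem:combined-initial-segments}, every countable initial segment lies in $\Gamma^{\mathrm{cmb}}_\infty$.
\item By Lemma~\ref{lem:combined-closure}, the class $\Gamma^{\mathrm{cmb}}_\infty$ preserves $\omega_1$ and $\CH$, and it is closed under quotients, cones, concatenations and products.
\item The bookkeeping lists reduction data cofinally often.
\end{enumerate}
Then I will apply Proposition~\ref{prop:reduction-blackbox}(ii). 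At reduction stages the preservation tests quantify over $\Gamma^{N,\mathrm{cmb}}_\delta$, and at the fixed point over $\Gamma^{\mathrm{cmb}}_\infty$. This is the full class of future continuations actually used, since graph stages, graph destruction tails and squaring tails all lie in this class. So the reduction argument goes through unchanged. Every future forcing the protected side of a reduction requirement must survive is a member of the class it was tested against.

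The remaining point is non-interference: graph stages must not produce spurious $\Psi_i$ codes. Graph stages write only into $\vec S^{\mathrm{gr}}$-blocks, with disjoint reservoirs and almost-disjoint coordinates. By Lemma~\ref{lem:basic-preservation}(vi) applied to the thinned sequences, they cannot create or alter decodings on the reduction blocks. Thus the tuples satisfying $\Psi_i$ in $W^{\mathrm{cmb}}$ are exactly those written at reduction stages. With that established, the reduction verification of~\cite{HOFFELNER2023103292} applies verbatim: $D^1\cup D^2=B_{m,p}\cup B_{k,p}$, $D^1\cap D^2=\emptyset$, and $D^1\subseteq B_{m,p}$, $D^2\subseteq B_{k,p}$.

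I expect the main obstacle to be the second step, showing that the reduction preservation tests remain valid when graph actions are interleaved. The danger is that the squaring continuations of Lemma~\ref{lem:squaring} are products of copied tails, and these copied tails contain reduction stages. Such copies could in principle make a protected $\Pi^1_3$ instance false, or duplicate reduction codes. I would handle this in two parts.
\begin{enumerate}[label=\textup{(\alph*)}]
\item Lemma~\ref{lem:copying-tails} places the product in the combined stable class, so the reduction tests already accounted for it.
\item Copied reduction stages replay the same decisions, so they write $\Psi_i$ codes consistent with the original ones. The reduction rule is monotone in which codes are present, so a repeated code does no harm.
\end{enumerate}
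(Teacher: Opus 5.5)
Your proposal is correct and follows essentially the same route as the paper. You dispose of the index-$0$ case, note that the reduction requirement is met cofinally often with preservation tests quantifying over $\Gamma^{\mathrm{cmb}}_\infty$ (so later graph, destruction and squaring tails are already accounted for), use disjointness of the graph coordinates to see that no $\Psi_i$ statement is created or destroyed, and conclude that the proof of Proposition~\ref{prop:reduction-blackbox} applies verbatim. The differences are minor: the paper treats index $0$ as a trivial reduction rather than re-indexing. Your point (b) is superfluous, and its claim that copied tails replay the same decisions is not something the black box gives you. The paper needs only your point (a), that every appended tail lies in the combined stable class.
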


\begin{proof}
Let $B_{m,p}$ and $B_{k,p}$ be two boldface $\mathbf{\Pi}^1_3$ sets of reals. If
one of the indices is $0$, then one of the two sets is all of $\Real$, and the
reduction is trivial. So assume $m,k\neq0$.

The construction meets the reduction requirement for $(p,m,k)$ cofinally often.
At such stages the old reduction rule is applied, but its preservation tests
quantify over the combined future forcings. Hence any protection obtained in the
reduction construction is protection against later graph stages as well. Since
the graph coordinates are disjoint from the reduction coordinates, they neither
create nor destroy any decoding statement $\Psi_i(p,x,m,k)$. Thus the proof of
Proposition~\ref{prop:reduction-blackbox} applies verbatim in the final combined
extension.

Consequently
\[
   D^1_{m,k,p}=B_{m,p}\cap\{x\mid\neg\Psi_1(p,x,m,k)\}
\]
and
\[
   D^2_{m,k,p}=B_{k,p}\cap\{x\mid\neg\Psi_2(p,x,m,k)\}
\]
are $\mathbf{\Pi}^1_3(p)$ and reduce $(B_{m,p},B_{k,p})$.
\end{proof}

\begin{lemma}\label{lem:combined-total-graphs}
In $W^{\mathrm{cmb}}$, if $A_{m,p}$ is the graph of a total function, then for
all reals $x,y$,
\[
   A_{m,p}(x,y)
   \quad\Longleftrightarrow\quad
   \Phi^{\mathrm{gr}}(p,x,y,m).
\]
In particular, every total $\mathbf{\Pi}^1_3$ graph is
$\mathbf{\Sigma}^1_3$.
\end{lemma}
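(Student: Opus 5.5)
The plan is to rerun the proof of Lemma~\ref{lem:no-junk} inside $W^{\mathrm{cmb}}$. Every occurrence of $\Gamma_\infty$ is replaced by $\Gamma^{\mathrm{cmb}}_\infty$, and every occurrence of $\Phi$ by $\Phi^{\mathrm{gr}}$. The first step is to transfer the needed auxiliary lemmas to the combined setting, namely Lemmas~\ref{lem:dichotomy}, \ref{lem:local-witnesses-for-graphs}, \ref{lem:final-persistence}, \ref{lem:old-reals-not-protected}, \ref{lem:copying-tails} and \ref{lem:squaring}.

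Each of these transfers uses only a small set of facts:
\begin{itemize}
\item cone closure, product closure and concatenation closure of the stable class, which Lemma~\ref{lem:combined-closure} provides for $\Gamma^{\mathrm{cmb}}_\infty$;
\item the uniform definability of $Y^{\infty,\mathrm{cmb}}$ in the local model, which gives shift-invariance of decisions in copied tails;
\item Shoenfield-based $\Sigma^1_3$ upward absoluteness, which is independent of the forcing class.
\end{itemize}
Lemma~\ref{lem:combined-initial-segments} supplies the analogue of Lemma~\ref{lem:final-initial-segments}. In the copied tails of the squaring argument, reduction stages are copied verbatim onto fresh reduction coordinates as well. This is harmless because the product closure in Lemma~\ref{lem:combined-closure} treats the three coordinate families separately.

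The second step is the code-management input. Lemma~\ref{lem:basic-preservation}(v)--(viii) must hold for $\Phi^{\mathrm{gr}}$ in the combined construction. Here I would use that reduction stages write only into $\vec S^{\mathrm{red},1}$ and $\vec S^{\mathrm{red},2}$, which are disjoint from $\vec S^{\mathrm{gr}}$, so by non-interference they cannot create or alter any $\Phi^{\mathrm{gr}}$-decoding. Hence every instance of $\Phi^{\mathrm{gr}}(p,x,w,m)$ in $W^{\mathrm{cmb}}$ has a least stage at which it was introduced, and that stage is a graph stage, either a preservation action or a guessing action.

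The third step repeats the three parts of Lemma~\ref{lem:no-junk}. Assume $A_{m,p}$ is the graph of a total function $f$.
\begin{enumerate}
\item \emph{No guessing entries.} Suppose a guessing entry $w$ exists for $(m,p,x)$. If $w=f(x)$, the appended combined destruction tail makes $\neg\varphi_m(x,w,p)$ true, and this persists, a contradiction. If $w\neq f(x)$, choose a support $B$ for $y=f(x)$. Exhaustive bookkeeping presents $(B,p,x,y,m)$ later. The no-witness case is impossible there, since it would destroy $y$. So the preservation case applies with witness $y$, and the combined squaring lemma yields two distinct members of the section, which persist by the combined Lemma~\ref{lem:final-persistence}. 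This contradicts functionality.
\item \emph{Forward direction.} Every $\Phi^{\mathrm{gr}}$-code comes from a preservation entry, so $\varphi_m$ holds for it.
\item \emph{Backward direction.} Given $\varphi_m(x,y,p)$, bookkeeping eventually presents $y$ in the preservation case. It must be $y$ that is coded, and persistence yields $\Phi^{\mathrm{gr}}(p,x,y,m)$.
\end{enumerate}
The ``in particular'' clause then follows as in Theorem~\ref{thm:model-total-graphs}, since $\Phi^{\mathrm{gr}}$ is uniformly $\Sigma^1_3$.

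I expect the main obstacle to be the combined squaring step. The copied tail now contains reduction stages, and these must make the same decisions over $M[H_0]$ as the originals did over $M$. The point to check is that reduction decisions, like graph decisions, depend only on the local model through a uniformly defined preservation test over $\Gamma^{\mathrm{cmb}}$, so the shift isomorphism transports them. I would verify this first by the induction of Lemma~\ref{lem:copying-tails}, invoking Proposition~\ref{prop:reduction-blackbox}(i)--(ii) for the reduction stages.
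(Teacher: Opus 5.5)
Your proposal is correct and follows the paper's own proof. Like the paper, you rerun Lemma~\ref{lem:no-junk} with $\Gamma^{\mathrm{cmb}}_\infty$ in place of $\Gamma_\infty$, transfer the auxiliary lemmas through the closure properties of Lemma~\ref{lem:combined-closure}, and use the disjointness of the graph and reduction coordinates for the first-stage and persistence properties of $\Phi^{\mathrm{gr}}$. Note that this also requires closure under regular tails and quotients, which Lemmas~\ref{lem:local-witnesses-for-graphs} and~\ref{lem:squaring} use and your list omits. In step~1, state explicitly that $B$ is chosen to contain the support of the earlier guessing stage, so that the later preservation action lies over a regular tail from $M$, as Lemma~\ref{lem:squaring} requires.
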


\begin{proof}
The proof of Lemma~\ref{lem:no-junk} is local to the graph coordinates once the
class of allowable tails has been fixed. In the present construction that class
is the combined class $\Gamma^{\mathrm{cmb}}_\infty$. Thus a graph preservation
witness is preserved not merely against future graph coding, but also against
future reduction coding. The fixed-point dichotomy, the clearing of old
candidates, the ``old reals do not become protected'' lemma, and the no-witness
squaring lemma all use only closure under cones, products, regular tails and
concatenations; these are supplied by Lemma~\ref{lem:combined-closure}. The
product-name separation principle is unaffected by adding disjoint reduction
coordinates.

For completeness we recall the decisive point. A graph guessing code for
$(p,x,w,m)$ only records that the combined preservation set for $(m,p,x)$ was
empty in the earlier local model. If the final graph is total and its value at
$x$ is $y$, then either $w=y$, in which case the earlier destruction of the guessed value gives a persistent contradiction, or $w\neq y$, in which case a
later preservation action for $y$ yields, by the same squaring argument, two
distinct combined-preserved witnesses in the $x$-section. Hence no guessing code
can contribute to a total graph. The first-stage and persistence properties of
the graph decoding then give exactly the equivalence displayed above.
\end{proof}

\begin{theorem}\label{thm:final}
Assuming $\operatorname{Con}(\ZFC)$, it is consistent that
$\mathbf{\Pi}^1_3$-reduction holds, $\mathbf{\Pi}^1_3$-uniformization fails, and
every total $\mathbf{\Pi}^1_3$-function has a $\mathbf{\Sigma}^1_3$ graph.
\end{theorem}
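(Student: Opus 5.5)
The plan is to take $W^{\mathrm{cmb}}$ as the witnessing model. The proof then assembles results already established for the combined construction. First, since $\operatorname{Con}(\ZFC)$ gives $\operatorname{Con}(\ZFC+V=L)$, we may start in $L$ and pass to the base model $W=L[G]$ of Section~\ref{sec:base}. This step is an $\omega$-distributive forcing over $L$, so $W$ satisfies $\CH$ and the coding trees remain $\Sigma_1$-definable over $H(\omega_2)$. Over $W$ we form the combined hierarchy of Definition~\ref{def:combined-derivative}. By Lemma~\ref{lem:combined-closure} it stabilizes at a nonempty class $\Gamma^{\mathrm{cmb}}_\infty$. We then run the exhaustive $\omega_1$-length combined construction. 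By Lemma~\ref{lem:combined-initial-segments} all its countable initial segments are combined $\infty$-allowable, so Lemma~\ref{lem:combined-closure} gives preservation of $\omega_1$ and $\CH$ in $W^{\mathrm{cmb}}$. The model is thus a $\ZFC$ model in which the decoding predicates $\Phi^{\mathrm{gr}}$ and $\Psi_i$ keep their intended meaning.

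Second, we verify the three clauses. $\mathbf{\Pi}^1_3$-reduction holds in $W^{\mathrm{cmb}}$ by Lemma~\ref{lem:combined-reduction}. By Lemma~\ref{lem:combined-total-graphs}, every total function whose graph is some $A_{m,p}$ has graph defined by $\Phi^{\mathrm{gr}}(p,x,y,m)$, so it is $\mathbf{\Sigma}^1_3(p)$. Since every boldface $\mathbf{\Pi}^1_3$ plane set is of the form $A_{m,p}$, this gives the total-graph principle. The failure of $\mathbf{\Pi}^1_3$-uniformization then follows from Corollary~\ref{cor:unif-fails}, which rests on the $\ZFC$ Lemma~\ref{lem:no-sigma-unif}. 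Concretely, a uniformizing function for the relation $R$ there would have a $\mathbf{\Pi}^1_3$ graph, which by the total-graph principle would be $\mathbf{\Sigma}^1_3$, and Lemma~\ref{lem:no-sigma-unif} rules that out.

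The main obstacle is the interaction between the two families of requirements. Reduction stages must not break graph preservation witnesses, and graph destruction and squaring tails must not break reduction protections. Both concerns are settled by defining allowability globally in Definition~\ref{def:combined-derivative}: each family's preservation tests quantify over $\Gamma^{\mathrm{cmb}}_\delta$, which contains the other family's actions. What must be checked is that every tail appended at a graph stage (guessing code, destruction forcing $\forceR_z$, squared copies) lies in the combined stable class, so that clause (ii) of Proposition~\ref{prop:reduction-blackbox} applies. Symmetrically, reduction tails must be legitimate continuations for Lemma~\ref{lem:squaring} and Lemma~\ref{lem:old-reals-not-protected}. I would argue this through the closure properties of Lemma~\ref{lem:combined-closure} under cones, products and concatenations, together with the disjointness of $\vec S^{\mathrm{gr}}$, $\vec S^{\mathrm{red},1}$ and $\vec S^{\mathrm{red},2}$. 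By the non-interference clause of Lemma~\ref{lem:basic-preservation}, disjointness ensures that neither family's coding can create or erase decodings in the other.
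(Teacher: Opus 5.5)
Your proposal is correct and is essentially the paper's own proof. You take $W^{\mathrm{cmb}}$, get a $\ZFC$ model preserving $\omega_1$ from Lemmas~\ref{lem:basic-preservation}, \ref{lem:combined-closure} and~\ref{lem:combined-initial-segments}, and then read off $\mathbf{\Pi}^1_3$-reduction from Lemma~\ref{lem:combined-reduction}, the total-graph principle from Lemma~\ref{lem:combined-total-graphs}, and the failure of $\mathbf{\Pi}^1_3$-uniformization from Corollary~\ref{cor:unif-fails}; your closing paragraph on how graph and reduction requirements interact only unpacks what the paper already packages into the combined derivative and its closure lemmas.
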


\begin{proof}
The final combined construction preserves $\omega_1$ and yields a model of
$\ZFC$ by Lemmas~\ref{lem:basic-preservation},~\ref{lem:combined-closure} and
\ref{lem:combined-initial-segments}. Lemma~\ref{lem:combined-reduction} gives
$\mathbf{\Pi}^1_3$-reduction. Lemma~\ref{lem:combined-total-graphs} gives the
total-graph property. Finally, Corollary~\ref{cor:unif-fails} implies that
$\mathbf{\Pi}^1_3$-uniformization fails in this model.
\end{proof}

\bibliographystyle{plain}
\bibliography{references}

\end{document}